\numberwithin{equation}{section}
\newtheorem{theorem}[subsection]{Theorem}
\newtheorem*{theorem*}{Theorem}
\newtheorem{lemma}[subsection]{Lemma}
\newtheorem{corollary}[subsection]{Corollary}
\newtheorem{proposition}[subsection]{Proposition}
\theoremstyle{definition}
\newtheorem{example}[subsection]{Example}
\newtheorem*{definition*}{Definition}
\theoremstyle{remark}
\newtheorem{remark}[subsection]{Remark}
\makeatletter\@addtoreset{equation}{section}
\def \KK {\mathbb{K}}
\def \LL {\mathbb{L}}
\def \ZZ {\mathbb{Z}}
\def \QQ {\mathbb{Q}}
\newcommand{\PP}{\mathbb{P}}
\newcommand{\G}{\mathbb{G}}
\newcommand{\Gm}{\mathbb{G}_{\mathrm{m}}}
\newcommand{\Ga}{\mathbb{G}_{\mathrm{a}}}
\newcommand{\Br}{\operatorname{Br}}
\newcommand{\Hom}{\operatorname{Hom}}
\newcommand{\End}{\operatorname{End}}
\newcommand{\Spec}{\operatorname{spec}}
\newcommand{\Aut}{\operatorname{Aut}}
\newcommand{\GL}{\operatorname{GL}}
\newcommand{\PGL}{\operatorname{PGL}}
\newcommand{\Char}{\operatorname{char}}
\newcommand{\Gal}{\operatorname{Gal}}
\newcommand{\rar}[1]{\stackrel{#1}{\longrightarrow}}
\newcommand{\Norm}{\operatorname{Norm}}
\def \ge {\geqslant}
\def \le {\leqslant}
\title{Boundedness for finite subgroups of linear algebraic groups}
\author{Constantin Shramov and Vadim Vologodsky}
\address{\emph{Constantin Shramov}
\newline
\textnormal{Steklov Mathematical Institute of RAS,
8 Gubkina street, Moscow 119991, Russia.
}
\newline
\textnormal{National Research University Higher School of Economics, Laboratory of Algebraic Geometry, NRU HSE, 6 Usacheva str., Moscow, 117312, Russia.
}
\newline
\textnormal{\texttt{costya.shramov@gmail.com}}}
\address{\emph{Vadim Vologodsky}
\newline
\textnormal{National Research University Higher School of Economics,  Laboratory of Mirror Symmetry, NRU HSE, 6 Usacheva str., Moscow, 117312, Russia.
}
\newline
\textnormal{\texttt{vologod@gmail.com}}}
\begin{document}

\begin{abstract}
We show the boundedness of finite subgroups in any anisotropic reductive
group over a perfect field that  contains
all roots of~$1$. Also, we provide explicit bounds for orders of finite
subgroups of automorphism groups of Severi--Brauer varieties and quadrics over
such fields.
\end{abstract}

\maketitle
%\tableofcontents

\section{Introduction}

In this paper we study finite subgroups of linear algebraic groups.
We say that a field~$\KK$ \emph{contains all roots of~$1$},
if, for every positive integer~$n$, the polynomial $x^n-1$ splits completely in $\KK[x]$.
An example of such a field is the field of rational functions on an irreducible variety
defined over an algebraically closed field.
Recall that a linear algebraic group is called \emph{anisotropic}
if it does not contain a subgroup isomorphic to the split one-dimensional torus $\Gm$.
Our main result is the following.

\begin{theorem}\label{theorem:main}
Let $\KK$ be a perfect field that contains all roots of~$1$,
and let $G$ be an anisotropic reductive group
over $\KK$. Then there exists a constant $L=L(G)$
such that any finite subgroup of $G(\KK)$ has order at most~$L$.
\end{theorem}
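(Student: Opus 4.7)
The plan is to split the argument into two parts: first bound the exponent of any finite subgroup of $G(\KK)$, and then pass from a bound on the exponent to a bound on the order via Jordan's theorem applied to a faithful linear representation.

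\textbf{Bounding the exponent.} Since $G$ is anisotropic reductive over the perfect field $\KK$, it has no proper parabolic $\KK$-subgroup, and by Borel--Tits every unipotent element of $G(\KK)$ would lie in the unipotent radical of such a parabolic; hence $G(\KK)$ has no nontrivial unipotents. In particular every torsion $g\in G(\KK)$ is semisimple, and by a standard result on reductive groups over perfect fields it lies in $T(\KK)$ for some $\KK$-rational maximal torus $T\subset G$. Such a $T$ is automatically anisotropic, so it suffices to bound the torsion of an arbitrary anisotropic $\KK$-torus of rank $r\le\rk G$. Setting $M=X^*(T)$ and denoting by $\Phi\subset\Aut(M)\cong\GL_r(\ZZ)$ the image of $\Gal(\bar\KK/\KK)$, the hypothesis $\mumu_\infty\subset\KK$ gives
\[
T(\KK)_{\mathrm{tors}} \;=\; \Hom\bigl(M_\Phi,\mumu_\infty\bigr).
\]
Anisotropy of $T$ is equivalent to $M^\Phi=0$, which (as $\QQ[\Phi]$ is semisimple for the finite group $\Phi$) forces $M_\Phi$ to be finite. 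Minkowski's theorem bounds $|\Phi|$ in terms of $r$, hence also $|M_\Phi|$; so every finite subgroup of $G(\KK)$ has exponent at most a constant $e=e(G)$.

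\textbf{From exponent to order.} Fix a faithful representation $G\hookrightarrow\GL_n$. A finite subgroup $\Gamma\subset G(\KK)\subset\GL_n(\bar\KK)$ contains, by Jordan's theorem, a normal abelian subgroup $A\triangleleft\Gamma$ of index at most $J(n)$. Its elements are commuting semisimple matrices of order dividing $e$, hence are simultaneously diagonalizable over $\bar\KK$ and inject into $\mumu_e^n$; consequently $|A|\le e^n$ and $|\Gamma|\le J(n)\,e^n$, a bound depending only on $G$. In positive characteristic there are no $p$-elements in $\Gamma$ (they would be unipotent), so $e$ is coprime to $\mathrm{char}\,\KK$ and the characteristic-zero version of Jordan's theorem applies.

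\textbf{Main obstacle.} The most delicate ingredients are the two structural inputs used to bound the exponent: the absence of unipotent $\KK$-points in an anisotropic reductive group over a perfect field, and the fact that any semisimple $\KK$-element lies in some $\KK$-rational maximal torus. Both are classical, but each genuinely uses the perfectness of $\KK$ and the full structure theory of reductive groups (parabolic subgroups and centralizers of semisimple elements). Once they are in hand, the remainder is a clean combination of character-lattice bookkeeping under $\mumu_\infty\subset\KK$, Minkowski's bound on finite subgroups of $\GL_r(\ZZ)$, and Jordan's theorem in $\GL_n$.
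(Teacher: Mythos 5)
Your argument is correct, and its skeleton is the same as the paper's: use Borel--Tits over the perfect field $\KK$ to place every torsion element of $G(\KK)$ in an anisotropic torus, bound the exponent there by Minkowski's theorem applied to the image $\Phi$ of the Galois group in the automorphisms of the (co)character lattice, and then convert the exponent bound into an order bound via a faithful representation. Your torus computation is Lemma~\ref{lemma:tori-bounded-subgroups-general-case} in dual form: the paper averages a cocharacter over the Galois image and uses anisotropy to make the average vanish, which is precisely your assertion that $M^{\Phi}=0$ forces $M_{\Phi}$ to be finite and killed by $|\Phi|$ (do spell out that last divisibility --- it is what makes ``hence also $|M_{\Phi}|$'' legitimate, via $|M_{\Phi}|\le|\Phi|^{r}$). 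The one genuinely different ingredient is the final step: you invoke Jordan's theorem and diagonalize the normal abelian subgroup, getting $|\Gamma|\le J(n)\,e^{n}$, whereas the paper uses the Burnside-type Theorem~\ref{theorem:Burnside} of Herzog and Praeger, by which a finite subgroup of $\GL_n$ whose prime-to-characteristic elements satisfy $g^{e}=1$ has prime-to-characteristic order dividing $e^{n}$. Both close the proof of Theorem~\ref{theorem:main}; the Herzog--Praeger route is what yields the multiplicative (divisibility) bounds of the sharper Theorem~\ref{theorem:LAG}, which the Jordan constant $J(n)$ destroys, while your route relies only on classical inputs. Your handling of positive characteristic --- $|\Gamma|$ is prime to $p$ because a $p$-element would be a nontrivial unipotent $\KK$-point, so the representation lifts to characteristic zero before Jordan is applied --- is also correct.
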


Actually, we will prove a more precise Theorem~\ref{theorem:LAG}
that gives some boundedness result for non-perfect fields and non-reductive
linear algebraic groups as well, and also provides an explicit (multiplicative) bound for the orders
of finite subgroups in terms of the rank of~$G$, the number of its connected components,
and the minimal dimension of a faithful representation of the maximal reductive quotient of
the neutral component of~$G_{\bar{\KK}}$. Moreover, we will see in Corollary~\ref{corollary:LAG}
that there is a bound that depends only on the rank and the number of connected components of~$G$.

A particular case of Theorem~\ref{theorem:main} for the projective orthogonal groups
was proved by
T.\,Bandman and Yu.\,Zarhin in \cite[Corollary 4.11, Theorem 4.14]{BandmanZarhin2015a}.
They applied its rank~$1$ case to analyze fiberwise
birational maps of varieties fibered into rational curves.

Let us say that a group $G$ has \emph{bounded finite subgroups},
if there exists a constant~\mbox{$L=L(G)$} such that,
for any finite subgroup
$\Gamma\subset G$, one has $|\Gamma|\le L$.
If this is not the case, we say that $G$ \emph{has unbounded finite subgroups}.
Thus, Theorem~\ref{theorem:main} claims that every anisotropic reductive
group over a perfect field that  contains
all roots of~$1$ has bounded finite subgroups.

Recall that a \emph{Severi--Brauer} variety is a variety $X$ over a field $\KK$ such that
its scalar extension to the algebraic closure of $\KK$ is isomorphic to a projective space.
For instance, one-dimensional Severi--Brauer varieties are conics over $\KK$.
The automorphism group scheme of an $(n-1)$-dimensional Severi--Brauer variety is an inner
form of the algebraic group~$\PGL_n$.
One can apply  Theorem~\ref{theorem:main} to study the automorphism groups of Severi--Brauer varieties.
For a Severi--Brauer variety~$X$ associated to a central simple algebra $A$ over a perfect  field $\KK$
that contains all roots of~$1$, Theorem~\ref{theorem:main} implies that $\Aut(X)$ has bounded finite
subgroups if and only if $A$ is a division algebra; see Remark~\ref{remark:SB-via-LAG} for details.
The following theorem, which we prove directly, amplifies this observation.

\begin{theorem}\label{theorem:SB}
Let $\KK$ be a field
that contains all roots of~$1$. Let~$X$ be a Severi--Brauer variety of dimension $n-1$ over~$\KK$,
and let $A$ be the corresponding central simple algebra. Assume that the characteristic
$\Char\KK$ of $\KK$ does not divide $n$.
The following assertions hold.
\begin{itemize}
\item[(i)] The group $\Aut(X)$ has bounded finite
subgroups if and only if $A$ is a division algebra; in particular, if $n$ is a prime number,
then $\Aut(X)$ has bounded finite
subgroups if and only if $X(\KK)=\varnothing$, i.e., $X$ is not isomorphic to~$\PP^{n-1}$.

\item[(ii)] Suppose that $A$ is a division algebra.  Let~$g\in\Aut(X)$ be an element of finite order,
and $\Gamma\subset\Aut(X)$ be a finite subgroup. Then $g^{n}=1$, and $\Gamma$ is an abelian group
whose order divides~$n^2$.
\end{itemize}
\end{theorem}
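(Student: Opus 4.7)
The plan is to handle the two parts in order, with Part~(i) reducing to the bound established in Part~(ii). For Part~(i), if $A$ is not a division algebra, Wedderburn gives $A \cong M_k(D)$ for some $k \ge 2$, and I would observe that $\Aut(X) = A^\times/\KK^\times$ then contains $\PGL_k(\KK)$, which has unbounded finite subgroups --- for instance, the image of $\mu_N(\KK)^k$ for arbitrary $N$ coprime to $\Char\KK$ has order $N^{k-1}$. The converse is immediate from the bound in Part~(ii). For the prime-$n$ reformulation, central simple algebras of prime degree are either split or division, and by Chatelet's theorem a Severi--Brauer variety is trivial if and only if it has a $\KK$-point.

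For the first assertion of Part~(ii), I would lift $g$ of finite order $m$ to $a \in A^\times$ with $a^m = \lambda \in \KK^\times$. Since $A$ is a division algebra, the subring $\KK[a] \subset A$ is a field of some degree $d$, and $d$ divides $n$ by the standard centralizer-dimension formula for subfields of central simple algebras. I would first rule out $\Char\KK \mid m$: otherwise, writing $m = p^k m'$ with $p = \Char\KK$ and $p \nmid m'$, the element $a^{m'}$ is a $p^k$-th root of $\lambda$, and since $x^{p^k} - \lambda$ has a unique root in $\bar{\KK}$, the image of $a^{m'}$ in $A \otimes_\KK \bar{\KK} \cong M_n(\bar{\KK})$ is scalar, forcing $a^{m'} \in \KK^\times$ and contradicting the minimality of $m$. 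With $\Char\KK \nmid m$ and $\KK \supseteq \mu_m$, the polynomial $x^m - \lambda$ splits over $\KK[a]$ into linear factors $\prod_{\zeta \in \mu_m}(x - \zeta a)$; the minimal polynomial of $a$ over $\KK$, being an irreducible degree-$d$ factor, has constant term equal to $a^d$ times a root of unity in $\KK$, so $a^d \in \KK^\times$. Hence $g^d = 1$, and $g^n = 1$ since $d \mid n$.

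For the second assertion, let $B \subset A$ be the $\KK$-subalgebra generated by a set of lifts of $\Gamma$. As a finite-dimensional $\KK$-subring of a division algebra, $B$ is itself a division algebra, and $\dim_\KK B$ divides $n^2$ since $A$ is free as a left $B$-module. My plan is to proceed by induction on $n$: setting $F = Z(B)$, the natural map $\Gamma \to B^\times/F^\times$ has abelian kernel $\Gamma \cap F^\times/\KK^\times$ and image $\Gamma' \subset B^\times/F^\times$. When $B \ne A$ (so either $\deg(B/F) < n$ or $F \ne \KK$), applying the inductive hypothesis to $\Gamma'$ --- noting that $F \supseteq \KK$ still contains all roots of unity --- gives $\Gamma'$ abelian with $|\Gamma'|$ dividing $\deg(B/F)^2$; combined with the abelian kernel, $\Gamma$ is abelian of order dividing $n^2$.

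The main obstacle is the case $B = A$, where the induction fails to reduce the degree. Here I would exploit the twisted group algebra $\KK^\sigma[\Gamma]$ (with cocycle $\sigma$ from the central extension $\KK^\times \to \tilde\Gamma \to \Gamma$, where $\tilde\Gamma$ is the preimage of $\Gamma$ in $A^\times$), which surjects onto $A$ and is semisimple since $\exp\Gamma \mid n$ is coprime to $\Char\KK$. The algebra $A$ thus appears as one of its simple components, and the alternating commutator pairing $c\colon \Gamma \times \Gamma \to \mu_\infty(\KK)$ detects non-commutativity in $\Gamma$. The delicate step is to show that $A$ being a central division algebra of degree $n$ (rather than merely a matrix algebra) forces this component to correspond to an abelian subquotient, hence $\Gamma$ itself is abelian, and then a dimension count yields $|\Gamma| \mid n^2$. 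Excluding non-abelian realizations of $A$ as a simple component of $\KK^\sigma[\Gamma]$ is the crux of the argument, and is where the division-algebra hypothesis on $A$ is essential.
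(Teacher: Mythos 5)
There is a genuine gap, and it sits exactly where you flag it: the abelianness of $\Gamma$ and the bound $|\Gamma|\mid n^2$ are never actually proved. In the case $B=A$ you describe the twisted group algebra $\KK^\sigma[\Gamma]$ and say that ``excluding non-abelian realizations of $A$ as a simple component'' is the crux, but you give no argument for it; note also that the surjection $\KK^\sigma[\Gamma]\to A$ only yields $n^2\le|\Gamma|$, so the ``dimension count'' you invoke points in the wrong direction for an upper bound. The inductive case $B\ne A$ is also broken: from an abelian kernel $\Gamma\cap(F^\times/\KK^\times)$ and an abelian quotient $\Gamma'$ you conclude that $\Gamma$ is abelian, but an extension of an abelian group by an abelian group need not be abelian. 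So neither branch of your case analysis establishes commutativity.

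The missing idea is an elementary one that the paper exploits at the outset: if $\KK$ contains all roots of $1$ and $A$ is a division algebra, then every finite-order element $x\in A^*$ satisfies $0=x^d-1=\prod_{\epsilon\in\mu_d(\KK)}(x-\epsilon)$, hence is a \emph{scalar} root of unity. Combining this with the reduced norm, one checks that the induced map $\Gamma\to\KK^*/(\KK^*)^n$ is injective (an element of the kernel lifts to a finite-order element of the norm-one group $A_1^*$, hence is central), which gives both abelianness and $g^n=1$ in one stroke --- your separate minimal-polynomial argument for the exponent is fine, but it does not substitute for this injectivity. For the order bound the paper then uses the commutator pairing $B\colon\Gamma\otimes\Gamma\to\mu_\infty(\KK)$ coming from the central extension $\tilde\Gamma$, produces a subgroup $\Lambda$ isotropic for $B$ with $|\Gamma|$ dividing $|\Lambda|^2$ (a lemma on alternating pairings on finite abelian groups), observes that the preimage of $\Lambda$ generates a commutative subfield $\LL\subset A$ that is \emph{Galois} over $\KK$ with $[\LL:\KK]\mid n$, and finally bounds $|\Lambda|$ by $[\LL:\KK]$ via the injection of $(\LL^*/\KK^*)_{\mathrm{tors}}$ into $\Hom(\Gal(\LL/\KK),\mu_\infty(\KK))$. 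Each of these steps would have to be supplied to complete your proposal. Your part (i) and the reduction of the exponent claim are essentially correct and in line with the paper.
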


In particular, if $\KK$ is a perfect field that contains all roots of~$1$, then
Theorem~\ref{theorem:SB} applies to all Severi--Brauer varieties over~$\KK$;
indeed, in this case $\Char\KK$ cannot divide the dimension of a central division algebra over~$\KK$,
see Remark~\ref{remark:char-vs-dim-perfect} below. In the case of an arbitrary field whose characteristic $p$ divides
the dimension of the corresponding division algebra, the structure of finite subgroups of the
automorphism group is still rather simple, see Proposition~\ref{proposition:SBp};
in particular, finite subgroups of the automorphism groups are always abelian in
this case as well. However, for such Severi--Brauer varieties finite $p$-subgroups of the automorphism
group can have arbitrarily large order, see Example~\ref{example:BMR}.

Applying Theorem~\ref{theorem:main} to a projective orthogonal group, one can prove that the automorphism group of a smooth quadric $Q$ over a field $\KK$ of characteristic different from~$2$ that contains all roots of~$1$ has bounded finite subgroups if and only if~\mbox{$Q(\KK)=\varnothing$}. We
find explicit bounds for orders of finite automorphism groups
of quadrics over appropriate fields, thus generalizing the results of \cite[\S4]{BandmanZarhin2015a} and making them more precise.

\begin{theorem}
\label{theorem:quadricnew}
Let $\KK$ be a field  that contains all roots of $1$. Assume that $\Char \KK \ne 2$ or~$\KK$ is perfect.
Let~\mbox{$n\ge 3$} be an  integer, and
let $Q\subset\PP^{n-1}$ be a smooth quadric hypersurface over~$\KK$.
The following assertions hold.
\begin{itemize}
\item[(i)] The group $\Aut(Q)$ has bounded finite
subgroups if and only if~\mbox{$Q(\KK)=\varnothing$}.

\item[(ii)] If $n$ is odd  and~\mbox{$Q(\KK)=\varnothing$}, then
every finite subgroup of $\Aut(Q)$ is isomorphic to~\mbox{$(\ZZ/2\ZZ)^m$}, where $m\le n-1$.

\item[(iii)] If $n$ is even and~\mbox{$Q(\KK)=\varnothing$}, then every
non-trivial element of finite order in the group $\Aut(Q)$
has order $2$ or $4$, and the order of every finite subgroup of $\Aut(Q)$ divides~$8^{n-1}$.
\end{itemize}
\end{theorem}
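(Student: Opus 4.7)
The plan is to identify $\Aut(Q)$ with the $\KK$-points of the projective orthogonal group scheme $\mathrm{PGO}(q)$ attached to the quadratic form $q$ defining $Q \subset \PP^{n-1}$, and then to analyze finite subgroups through eigenvalue arguments combined with the anisotropy of $q$. For~(i), the ``if'' direction---when $Q(\KK)=\varnothing$ and hence $q$ is anisotropic---follows from Theorem~\ref{theorem:main} applied to the anisotropic reductive group $\mathrm{PGO}(q)$ (with the non-perfect $\Char\KK\neq 2$ case absorbed into the explicit bound proved in~(iii), or alternatively covered by Theorem~\ref{theorem:LAG}). Conversely, a rational point on $Q$ yields a hyperbolic plane summand in $q$, hence an embedding $\Gm\hookrightarrow\mathrm{PGO}(q)$; since $\KK$ contains all roots of~$1$, $\Gm(\KK)$ supplies finite cyclic subgroups of arbitrarily large order.

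The technical core of~(ii) and~(iii) is the following key lemma: every element $\tilde g \in \mathrm{O}(q)(\KK)$ of finite order is an involution. Indeed, $\tilde g$ is semisimple with eigenvalues among the roots of unity, so $\tilde g$ diagonalises over $\KK$ as $V = \bigoplus V_\zeta$; the isometry relation forces each $V_\zeta$ with $\zeta^2 \ne 1$ to be totally isotropic, which by anisotropy forces $V_\zeta = 0$, so $\tilde g^2 = \mathrm{Id}$. For~(ii), since $n$ is odd one has $\mathrm{O}(q) = \mathrm{SO}(q) \times \{\pm\mathrm{Id}\}$ and so $\mathrm{PGO}(q) \cong \mathrm{SO}(q)$, and the key lemma shows every finite subgroup $\Gamma$ is elementary abelian of some rank $m$. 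Simultaneously diagonalising gives $V = \bigoplus_{\chi\in S} V_\chi$ with $S \subseteq \widehat\Gamma$; faithfulness forces $S$ to generate $\widehat\Gamma \cong (\ZZ/2\ZZ)^m$, so $|S| \ge m$. The $\mathrm{SO}$-condition translates to $\sum_\chi \dim(V_\chi)\,\chi = 0$ in $\widehat\Gamma$. If $|S|=m$, then $S$ would be a basis admitting no nontrivial relation, forcing all $\dim V_\chi$ even and hence $n$ even---a contradiction. Thus $|S| \ge m+1$, and $|S| \le n$ gives $m \le n-1$.

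For~(iii), the order bound is established by the following trick. Given $g \in \mathrm{PGO}(q)(\KK)$ of finite order $N$, lift it to $\tilde g \in \mathrm{GO}(q)(\KK)$ (possible by Hilbert~90 applied to $1 \to \Gm \to \mathrm{GO}(q) \to \mathrm{PGO}(q) \to 1$), so that $\tilde g^N = \lambda\cdot\mathrm{Id}$ and $\mu(\tilde g)^N = \lambda^2$, where $\mu$ denotes the multiplier character. Setting $h := \tilde g^2 / \mu(\tilde g)$, a direct computation shows $h \in \mathrm{O}(q)(\KK)$ and $h^N = \mathrm{Id}$, so by the key lemma $h^2 = \mathrm{Id}$; equivalently $\tilde g^4 = \mu(\tilde g)^2 \cdot \mathrm{Id}$ and hence $g^4 = 1$. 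For the size bound, filter $\Gamma$ using the multiplier homomorphism $\sigma \colon \mathrm{PGO}(q)(\KK) \to \KK^*/\KK^{*2}$, $g \mapsto [\mu(\tilde g)]$, whose kernel is the image of $\mathrm{O}(q)(\KK)/\{\pm\mathrm{Id}\}$. The intersection $\Gamma_0 := \Gamma \cap \ker\sigma$ lifts to an elementary abelian $2$-subgroup of $\mathrm{O}(q)(\KK)$ by the key lemma, which is simultaneously diagonalisable and hence of rank at most $n$, so $|\Gamma_0| \le 2^{n-1}$. The quotient $\Gamma/\Gamma_0$ embeds into $\KK^*/\KK^{*2}$; I would bound $|\Gamma/\Gamma_0|$ by $4^{n-1}$ using the observation that, for any $\tilde g, \tilde g' \in \tilde\Gamma \subset \mathrm{GO}(q)(\KK)$, the commutator $[\tilde g,\tilde g']$ lies in $\mathrm{SO}(q)(\KK)$ and is an involution by the key lemma (so equals $\pm\mathrm{Id}$ whenever $g, g'$ commute in~$\Gamma$), together with the representation-theoretic constraints coming from the faithful projective action of $\Gamma$ on the $n$-dimensional space $V$. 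Combining yields $|\Gamma| \mid 2^{n-1}\cdot 4^{n-1} = 8^{n-1}$.

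The principal obstacle is the last step: controlling $|\Gamma/\Gamma_0|$. While the order-divides-$4$ conclusion and the bound on $|\Gamma_0|$ follow fairly mechanically from the key lemma, the sharp estimate $|\Gamma/\Gamma_0| \le 4^{n-1}$ appears to require a careful Stone--von~Neumann-style analysis of the central extension $1 \to \KK^* \to \tilde\Gamma \to \Gamma \to 1$, exploiting the resulting skew-symmetric $\{\pm 1\}$-valued pairing on $\Gamma$ (which endows its abelian quotient with a symplectic $\FF_2$-structure) and invoking the anisotropy of $q$ to exclude projective summands too large for $V$ to accommodate.
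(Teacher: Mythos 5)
Your treatment of (i), of the key involution lemma, of (ii), and of the exponent claim in (iii) is sound and close in spirit to the paper's; indeed your character-theoretic derivation of $m\le n-1$ in (ii) from the determinant-one condition is spelled out more carefully than in the paper, which only quotes the bound $m\le\dim V$ for finite subgroups of $\mathrm{O}(V,q)(\KK)$. Two smaller defects first. Your key lemma presupposes that a finite-order element of $\mathrm{O}(V,q)(\KK)$ is semisimple, which fails for elements of order $p=\Char\KK>2$; these must be ruled out separately (the paper's Lemma~\ref{lemma:orderp-in-O} does this by a Jordan-block computation showing such an element would force $q$ to represent $0$). And you never address the case $\Char\KK=2$ with $\KK$ perfect, which the hypotheses allow: there one uses Arf's normal form (Lemma~\ref{lemma:quadratic-form-in-char-2}) to show that $Q(\KK)\ne\varnothing$ always holds and that $\Aut(Q)$ always has unbounded finite subgroups, so that (i) holds and (ii), (iii) are vacuous.

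The genuine gap is the bound $|\Gamma|\mid 8^{n-1}$ in (iii). You bound $|\Gamma\cap\ker\sigma|$ by $2^{n-1}$, but for the quotient $\Gamma/\Gamma_0\hookrightarrow\KK^*/(\KK^*)^2$ you only sketch a ``Stone--von Neumann-style'' plan and explicitly concede you cannot complete it; nothing in your setup prevents $\Gamma/\Gamma_0$ from being a large elementary abelian $2$-group a priori, and the commutator-pairing observations you list do not by themselves yield $|\Gamma/\Gamma_0|\le 4^{n-1}$. The missing ingredient is the Burnside-type theorem of Herzog and Praeger (Theorem~\ref{theorem:Burnside}): a finite subgroup of $\GL_n$ whose elements of order prime to the characteristic all satisfy $g^d=1$ has order (prime to the characteristic) dividing $d^n$. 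The paper applies it directly: take the preimage $\tilde\Gamma$ of $\Gamma$ in $\mathrm{O}(V,q)(\bar{\KK})$, note that every element of $\tilde\Gamma$ has order dividing $8$, adjoin the scalar matrices of order dividing $8$ to form $\hat\Gamma$ (still of exponent dividing $8$, the scalars being central), conclude $|\hat\Gamma|\mid 8^{n}$, and divide by $|\hat\Gamma|/|\Gamma|=8$. If you wish to keep your filtration, you would still need such an exponent-to-order theorem, or a genuinely completed Heisenberg-type analysis, to control the quotient; as written, the order bound in (iii) is not proved.
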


According to Theorem~\ref{theorem:quadricnew}(ii), if $Q$ is a smooth odd-dimensional quadric
without points over a field of characteristic different from $2$ that contains all roots of~$1$, then every finite group faithfully acting
on $Q$ is abelian. If $Q$ has even dimension, this is not always the case, see Example~\ref{example:Pfister}.

Note that Theorem~\ref{theorem:quadricnew} fails
over a non-perfect field
of characteristic~$2$,
see Example~\ref{example:BMR}.
Note also that one cannot drop the assumption on the existence of  roots of~$1$ in Theorems~\ref{theorem:SB} and~\ref{theorem:quadricnew}.
Indeed, the conic over the field of real numbers defined by the equation~\mbox{$x^2+y^2+z^2=0$}
has automorphisms of arbitrary finite order.

\smallskip
The plan of our paper is as follows.
In~\S\ref{section:tori} we prove Theorem \ref{theorem:main} in the case when $G$ is a torus. The proof is based on the  Minkowski theorem on finite subgroups of $\GL_n(\ZZ)$ and elementary Galois theory.

In~\S\ref{section:lin-alg-groups} we study finite subgroups of linear algebraic groups
and prove Theorem~\ref{theorem:LAG}, which is a more precise version of Theorem~\ref{theorem:main}. The idea  of the proof is the following. According to
a result of Borel and Tits, for every connected anisotropic reductive group $G$ over a perfect field~$\KK$, every element~\mbox{$g\in G(\KK)$} is contained in $T(\KK)$, for some torus $T\subset G$. Using the results of~\S\ref{section:tori} we bound the order of $g$. On the other hand, choosing a faithful representation of $G$
we get an embedding~\mbox{$G(\KK) \subset \GL_N(\KK)$} for some positive integer $N$. This, together with a Burnside type result due to~\cite{HerzogPraeger} (see Theorem~\ref{theorem:Burnside} below), proves that $G(\KK)$ has bounded finite subgroups.

In~\S\ref{section:SB} we describe automorphism groups of Severi--Brauer varieties and prove
Theorem~\ref{theorem:SB}.
In~\S\ref{section:quadrics} we prove
Theorem~\ref{theorem:quadricnew}.

\smallskip
Throughout the paper by $\bar{\KK}$ we denote an algebraic closure
of a field~$\KK$, and by $\KK^{sep}$ we denote a separable closure of~$\KK$
(recall that $\KK^{sep}=\bar{\KK}$ provided that $\KK$ is perfect).
Given a variety $X$ defined over~$\KK$,
for an arbitrary field extension~\mbox{$K\supset\KK$} we denote
by~$X_{K}$
the corresponding scalar extensions to~$K$, and by $X(K)$ we denote the set of $K$-points of~$X$.
Abusing notation a bit, we write $\PP^n$ for a projective space over a field $\KK$,
and similarly write $\Gm$ and $\Ga$ for the multiplicative and additive groups, respectively.

\smallskip
We are grateful to
J.-L.\,Colliot-Th\'el\`ene,
S.\,Gorchinsky, A.\,Kuznetsov, D.\,Timashev, and B.\,Zavyalov for useful discussions.
Special thanks go to J.-P.\,Serre who suggested to reformulate the results of the earlier
draft of this paper in terms of multiplicative bounds.
Constantin Shramov was partially supported by the Russian Academic Excellence Project~\mbox{``5-100''},
by Young Russian Mathematics award, and by the Foundation for the
Advancement of Theoretical Physics and Mathematics ``BASIS''.
Vadim Vologodsky was partially supported
by the  Laboratory of Mirror Symmetry NRU~HSE, RF government grant,
ag.~\mbox{N\textsuperscript{\underline{o}}~$14.641.31.0001$}.

\section{Tori}
\label{section:tori}

In this section we  study elements of finite order
in algebraic tori.

The following result is a famous theorem of H.\,Minkowski, see~\cite[\S1]{Minkowski-1887}
or~\mbox{\cite[Theorem~1]{Serre-07}}.

\begin{theorem}
\label{theorem:Minkowski}
For any positive integer $n$, the group $\GL_n(\mathbb{Z})$ has bounded finite subgroups.
\end{theorem}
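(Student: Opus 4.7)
The plan is to prove the classical Minkowski bound by showing that for any odd prime $p$, the reduction-mod-$p$ homomorphism
\[
r_p \colon \GL_n(\ZZ) \longrightarrow \GL_n(\ZZ/p\ZZ)
\]
is injective on every finite subgroup $\Gamma \subset \GL_n(\ZZ)$. Granting this, any such $\Gamma$ embeds into the finite group $\GL_n(\FF_p)$, so $|\Gamma|$ divides $|\GL_n(\FF_p)| = \prod_{i=0}^{n-1}(p^n - p^i)$. Choosing for instance $p=3$ immediately gives a uniform bound $L(n) = |\GL_n(\FF_3)|$ on the order of any finite subgroup, which is the desired statement.

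The core of the argument is therefore the kernel computation: I must show that the only element of finite order in the principal congruence subgroup $\ker(r_p)$ is the identity, whenever $p$ is odd. Take a nonidentity $g \in \ker(r_p)$ of finite order; by passing to an appropriate power, I may assume $g$ has prime order $\ell$. Write
\[
g = I + p^k M, \qquad k \ge 1, \quad M \in \Mat_n(\ZZ), \quad M \not\equiv 0 \pmod{p}.
\]
Expanding $I = g^\ell$ by the binomial theorem yields
\[
0 = \ell\, p^k M + \binom{\ell}{2} p^{2k} M^2 + \dots + p^{k\ell} M^\ell.
\]
If $\ell \ne p$, divide by $p^k$ and reduce modulo $p$ to obtain $\ell M \equiv 0 \pmod p$, contradicting $M \not\equiv 0 \pmod p$. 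If $\ell = p$, divide by $p^{k+1}$; since $p$ is odd, the binomial coefficient $\binom{p}{2} = p(p-1)/2$ is divisible by $p$, so all remaining terms beyond $M$ are divisible by $p$, again forcing $M \equiv 0 \pmod p$, a contradiction. This is the step where the hypothesis that $p$ is odd is essential, and it is the only nontrivial point of the proof.

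Once injectivity of $r_p$ on torsion is established, the theorem follows at once: any finite subgroup $\Gamma$ maps injectively into $\GL_n(\FF_p)$, and so has order bounded by $|\GL_n(\FF_p)|$, a quantity depending only on $n$. I anticipate no substantive difficulty beyond the $p$-adic binomial estimate just described; the remainder is elementary bookkeeping.
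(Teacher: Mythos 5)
Your proof is correct. The paper does not prove this theorem at all --- it is quoted as a classical result with references to Minkowski's 1887 paper and Serre's survey --- so there is no in-paper argument to compare against; but what you give is precisely the standard proof (essentially Minkowski's lemma): for an odd prime $p$ the principal congruence subgroup $\ker\bigl(\GL_n(\ZZ)\to\GL_n(\ZZ/p\ZZ)\bigr)$ is torsion-free, hence every finite subgroup injects into $\GL_n(\FF_p)$. Your $p$-adic valuation bookkeeping in the case $\ell=p$ is right: after dividing by $p^{k+1}$ the $j$-th term for $2\le j\le p-1$ carries a factor $p^{(j-1)k}$ because $p\mid\binom{p}{j}$, and the last term carries $p^{(p-1)k-1}$, which is positive exactly because $p\ge 3$; the example $-I\in\ker(r_2)$ shows the oddness hypothesis cannot be dropped. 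One remark on scope: your argument yields only the crude bound $|\GL_n(\FF_3)|$, whereas the paper goes on to use the sharper constants $\Upsilon_A(n)$ (maximal order) and $\Upsilon_M(n)$ (lcm of orders); but the theorem as stated asserts only boundedness, and the mere existence of a finite bound is all that is needed for those constants to be well defined, so your proof suffices for everything the paper extracts from this statement.
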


Theorem~\ref{theorem:Minkowski} tells us that
the maximal order $\Upsilon_A(n)$ of a finite subgroup
in~\mbox{$\GL_n(\mathbb{Z})$}
and the least common multiple $\Upsilon_M(n)$ of the orders of such subgroups
are well defined constants depending only on~$n$. For small $n$, one can compute the values
of $\Upsilon_A(n)$ and~$\Upsilon_M(n)$.
For instance, we have
$$
\Upsilon_A(1)= \Upsilon_M(1)=2,\quad \Upsilon_A(2)=12, \quad  \Upsilon_M(2)=24, \quad \Upsilon_A(3)=\Upsilon_M(3)=48,
$$
see e.g.~\mbox{\cite[\S1.1]{Serre-07}}
and~\mbox{\cite[\S1]{Tahara}}.
In particular, neither of the bounds given by $\Upsilon_A(2)$ and $\Upsilon_M(2)$
is strictly stronger than the other one.

The following is the main technical result of this section.

\begin{lemma}\label{lemma:tori-bounded-subgroups-general-case}
Let $n$ and $d$ be positive integers.
Let $\KK$ be a field such that the characteristic of~$\KK$
does not divide~$d$, and $\KK$
contains a primitive $d$-th root of~$1$.
Let~$T$ be an anisotropic $n$-dimensional algebraic
torus over $\KK$ such that $T(\KK)$ contains a point of order~$d$.
Then~\mbox{$d\le \Upsilon_A(n)$} and $d$ divides $ \Upsilon_M(n)$.
\end{lemma}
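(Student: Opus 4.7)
The plan is to translate the problem into one about the finite image $H$ of the Galois action on the cocharacter lattice of $T$, viewed as a subgroup of $\GL_n(\ZZ)$, and then to pin down $d$ in terms of $|H|$ by an averaging argument that exploits the anisotropy hypothesis.

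First, I would pass to the cocharacter lattice $N$ of $T$, a free abelian group of rank $n$ carrying a continuous action of $\Gamma=\Gal(\KK^{sep}/\KK)$ that factors through a finite quotient (since $T$ splits over a finite Galois extension). Let $H\subset\GL(N)\cong\GL_n(\ZZ)$ be the image. The anisotropy of $T$ is then precisely the condition $N^H=0$, equivalently $(N\otimes\QQ)^H=0$.

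Next I would identify the $d$-torsion of $T(\KK)$ with $(N/dN)^H$. This uses two standard inputs: because $\Char\KK\nmid d$, the group scheme $T[d]$ is \'etale, so $T[d](\KK^{sep})\cong N\otimes_\ZZ\mumu_d$ as $\Gamma$-modules; and because $\KK$ contains a primitive $d$-th root of $1$, the module $\mumu_d$ is trivial as a $\Gamma$-module, so a choice of generator yields $T[d](\KK^{sep})\cong N/dN$ as $H$-modules. A $\KK$-point of $T$ of order $d$ therefore amounts to a vector $v\in N$ whose class $\bar v\in N/dN$ is $H$-fixed and has order exactly $d$.

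The heart of the argument is an averaging trick. Set $S=\sum_{h\in H}h\in\End_\ZZ(N)$. On $N\otimes\QQ$ this equals $|H|$ times the projection onto $(N\otimes\QQ)^H$, which vanishes by anisotropy; hence $S=0$ on $N\otimes\QQ$, and therefore already on $N$. On the other hand, $H$-invariance of $\bar v$ gives $h(v)-v\in dN$ for every $h\in H$, so summing yields $S(v)-|H|\,v\in dN$, i.e.\ $|H|\,v\in dN$. Because $\bar v$ has order exactly $d$ in $N/dN$, this forces $d\mid |H|$. Combined with the Minkowski bounds $|H|\le\Upsilon_A(n)$ and $|H|\mid\Upsilon_M(n)$ coming from Theorem~\ref{theorem:Minkowski}, we conclude $d\le\Upsilon_A(n)$ and $d\mid\Upsilon_M(n)$.

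The only step doing real work is the averaging observation — once one sees that anisotropy forces $\sum_{h\in H}h$ to annihilate $N$ integrally, the divisibility $d\mid|H|$ falls out immediately. The passage from $T$ to $(N,H)\subset\GL_n(\ZZ)$ and the identification of $T[d](\KK)$ with $(N/dN)^H$ are routine pieces of torus theory, used to cast the lemma into a form where Minkowski applies.
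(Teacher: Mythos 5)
Your proof is correct and takes essentially the same route as the paper's: both identify the $d$-torsion of $T(\KK)$ with the invariants of $N/dN$ under the finite Galois image $H\subset\GL_n(\ZZ)$ using the triviality of $\mu_d$, apply the averaging operator $\sum_{h\in H}h$ to a lift $v$ of the invariant class (this sum annihilates $N$ by anisotropy), deduce $d\mid|H|$, and finish with Minkowski. Incidentally, you state the final divisibility the right way round; the paper's closing phrase ``$|\Theta|$ divides $d$'' is a typo for ``$d$ divides $|\Theta|$''.
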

\begin{proof}
Let  $\check T = \mathrm{Hom} (\Gm, T_{\KK^{sep}})$ be the lattice of cocharacters of $T$.
Recall (see~\mbox{\cite[\S8.12]{Borel}}) that the functor $T \mapsto \check T$ induces an equivalence between the category of algebraic tori over $\KK$ and the category of free abelian groups of finite rank equipped with
an action of the Galois group $\Gal(\KK^{sep}/\KK)$ such that the image of
the homomorphism~\mbox{$\Gal(\KK^{sep}/\KK) \to \Aut(\check T)$} is finite. Denote this image by~$\Theta$.

The group of $d$-torsion elements of $T(\KK^{sep})$ is isomorphic,
as a Galois module, to~\mbox{$\check T \otimes \mu_d$}, where $\mu_d$  is the group of $d$-th roots of unity in
$\KK^{sep}$.  Since $\KK$ contains a primitive $d$-th root of $1$,
the Galois module $\mu_d$ is the trivial module $\mathbb{Z}/d\mathbb{Z}$, so that the Galois
module~\mbox{$\check T \otimes \mu_d$}  is isomorphic to $\check T/d\check T$. Hence, a point $x\in T(\KK)$
of order $d$ can be viewed as a
$\Gal(\KK^{sep} /\KK)$-invariant element $\bar v \in \check T/d\check T $ of order $d$ (so that $m\bar{v}\neq 0$
for~\mbox{$m<d$}).
Let~\mbox{$v\in  \check T$} be any preimage of $\bar v$ under the projection $ \check T \to  \check T/d\check T$, and let
$$
w= \sum_{\theta \in \Theta} \theta(v).
$$
Since  $\bar v$ is  $\Gal(\KK^{sep} /\KK)$-invariant, the image of
$w$ in $\check T/d\check T$ is equal to $|\Theta|\bar v$.
On the other hand, it is clear that $w$ is a $\Gal(\KK^{sep} /\KK)$-invariant element of $\check T$.
By the above mentioned equivalence of categories, $w$ gives rise to
a non-trivial homomorphism  $\Gm \to T$, provided that $w$ itself is non-zero.
Since $T$ is anisotropic, we conclude that $w=0$. Therefore,
$|\Theta|$ divides $d$, and the required assertion follows.
\end{proof}

\begin{remark}
J.-L.\,Colliot-Th\'el\`ene pointed out to us that the proof of Lemma~\ref{lemma:tori-bounded-subgroups-general-case}
can be reformulated in the following way.
The short exact sequence of $\Theta$-modules
$$ 0 \longrightarrow \check T \stackrel{d}{\longrightarrow}\check T \longrightarrow \check T/d\check T \longrightarrow 0$$
gives rise to the long exact sequence of cohomology groups
\begin{equation}\label{longex}
\ldots\to  H^0(\Theta, \check T) \to
H^0(\Theta, \check T/d\check T) \to  H^1(\Theta, \check T) \to \ldots
\end{equation}
Since $T$ is anisotropic, we have that
 $H^0(\Theta, \check T)=0$. Thus, the second map in~\eqref{longex} is injective.
On the other hand, the group~\mbox{$H^1(\Theta, \check T)$} is annihilated
by $|\Theta|$ (see for instance~\mbox{\cite[Proposition~IV.6.3]{CF67}}).
It follows that the group  $H^0(\Theta, \check T/d\check T)$  of $d$-torsion points of $T(\KK)$  is also  annihilated by~$|\Theta|$.
\end{remark}

Lemma~\ref{lemma:tori-bounded-subgroups-general-case}  implies the following result.

\begin{corollary}\label{corollary:tori}
Let $\KK$ be a field
that contains all roots of~$1$, and let $T$ be an anisotropic $n$-dimensional
torus over $\KK$. Let $g\in T(\KK)$ be an element of finite order $d$,
and let~\mbox{$\Gamma\subset T(\KK)$} be a finite subgroup.
Then $d$ is not divisible by $\Char\KK$, one has $d\le \Upsilon_A(n)$, and~$d$ divides~$\Upsilon_M(n)$.
Moreover, one has $|\Gamma|\le \Upsilon_A(n)^n$, and $|\Gamma|$ divides~$ \Upsilon_M(n)^n$.
\end{corollary}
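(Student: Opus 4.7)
The plan is to first reduce the single-element claim to Lemma~\ref{lemma:tori-bounded-subgroups-general-case} after ruling out torsion divisible by the characteristic, and then extract the subgroup bound by combining that reduction with the structure theorem for finite abelian groups applied inside~$(\ZZ/d\ZZ)^n$.

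First I would verify that $d$ is coprime to $p=\Char\KK$. Since $T$ splits over $\KK^{sep}$, we have $T(\KK^{sep})\cong\bigl((\KK^{sep})^\times\bigr)^n$, and in characteristic~$p$ the only $p$-power root of unity in $\KK^{sep}$ is~$1$. Hence $T(\KK^{sep})$, and in particular $T(\KK)$, contains no nontrivial element of $p$-power order, so $p\nmid d$. Lemma~\ref{lemma:tori-bounded-subgroups-general-case} then applies to~$g$ and immediately gives $d\le\Upsilon_A(n)$ and $d\mid\Upsilon_M(n)$.

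Next I would bound $|\Gamma|$. Let $d$ denote the exponent of~$\Gamma$; by the previous step \mbox{$p\nmid d$}, and by hypothesis $\KK$ contains a primitive $d$-th root of~$1$. As recalled in the proof of Lemma~\ref{lemma:tori-bounded-subgroups-general-case}, under these hypotheses
\[
T(\KK^{sep})[d]\cong\check T/d\check T\cong(\ZZ/d\ZZ)^n
\]
as abelian groups, so $\Gamma\subset T(\KK)[d]$ embeds into $(\ZZ/d\ZZ)^n$. The structure theorem for finite abelian groups then yields a decomposition $\Gamma\cong\ZZ/d_1\ZZ\oplus\cdots\oplus\ZZ/d_k\ZZ$ with $k\le n$ and each $d_i\mid d$. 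Each summand provides a cyclic subgroup of $T(\KK)$ of order~$d_i$, so the already-proved single-element statement gives $d_i\le\Upsilon_A(n)$ and $d_i\mid\Upsilon_M(n)$ for every~$i$. Multiplying over~$i$ produces $|\Gamma|=\prod_i d_i\le\Upsilon_A(n)^n$ and $|\Gamma|\mid\Upsilon_M(n)^n$.

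The only substantive input is Lemma~\ref{lemma:tori-bounded-subgroups-general-case}, which is already in place; the remaining work consists of the characteristic-$p$ observation that $\mu_p(\KK^{sep})$ is trivial, together with the bookkeeping step of realizing~$\Gamma$ as an abelian group of rank at most~$n$ via the identification of~$T[d]$ with $\check T/d\check T$. I do not anticipate any genuine obstacle beyond correctly threading these pieces together.
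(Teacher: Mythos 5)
Your proof is correct and follows essentially the same route as the paper: rule out $p$-torsion via $T_{\KK^{sep}}\cong\Gm^n$, invoke Lemma~\ref{lemma:tori-bounded-subgroups-general-case} for the order of a single element, and then use that $\Gamma$ is an abelian group generated by at most $n$ elements (each cyclic factor having order bounded by, and dividing, the single-element bounds). The paper merely states the ``generated by at most $n$ elements'' step without the explicit identification of $T(\KK^{sep})[d]$ with $(\ZZ/d\ZZ)^n$ that you supply; your version just makes that bookkeeping explicit.
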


\begin{proof}
Note that if $\Char\KK=p$ is positive, then $T(\KK)$
does not contain elements of order~$p$,
because $T_{\KK^{sep}}\cong\Gm^n$, and there are no such elements
in $(\KK^{sep})^*$.
Thus, if there is an element of finite order $d$ in $T(\KK)$, then $d$ is not divisible by~$p$,
so that $d\le \Upsilon_A(n)$ and $d$ divides $ \Upsilon_M(n)$ by Lemma~\ref{lemma:tori-bounded-subgroups-general-case}.
It remains to notice that every finite subgroup $\Gamma$ of $T(\KK)$ is an abelian
group generated by at most $n$ elements. Therefore, $|\Gamma|\le \Upsilon_A(n)^n$
and $|\Gamma|$ divides~$ \Upsilon_M(n)^n$.
\end{proof}

\begin{example}
\label{example:P1-without-2-pts}
Let $\KK$ be a field
that contains all roots of~$1$, and let $T$ be a one-dimensional torus over $\KK$ that is
different from $\Gm$.
Since $\Upsilon_A(2)=\Upsilon_M(2)=2$, we conclude from
Corollary~\ref{corollary:tori} that every non-trivial finite subgroup of $T(\KK)$ has order~$2$.
If $\Char\KK=2$, then $T(\KK)$ does not contain non-trivial finite subgroups at all, because
there are no elements of order~$2$ in~$(\KK^{sep})^*$.
\end{example}

In certain cases the bound provided by Corollary~\ref{corollary:tori} can be improved.

\begin{example}\label{example:L-over-K-torus}
Let $\KK$ be a field that contains all roots of $1$, and let $\KK\subset \LL$ be a Galois extension of degree $n$.
Consider the torus $T=R_{\LL/\KK}\Gm/\Gm$
over $\KK$, where $R_{\LL/\KK}$ denotes the Weil restriction of scalars, and the embedding $\Gm \hookrightarrow R_{\LL/\KK} \Gm$
comes by adjunction from the identity morphism $\Gm \to \Gm $. Let $\Gamma$ be a finite subgroup of $T(\KK)\cong \LL^*/\KK^*$.
We claim that $|\Gamma|$ divides $n$.
Indeed, since $\KK$ contains all roots of $1$, there is a well-defined homomorphism
$$
\zeta\colon (\LL^*/\KK^*)_{tors}\to \Hom\big(\Gal(\LL/\KK),\mu_\infty(\KK)\big)
$$
which sends a torsion element $\alpha\in \LL^*/\KK^*$ to
the map
$$
\gamma\mapsto \frac{\gamma(\tilde{\alpha})}{\tilde{\alpha}},
$$
where $\tilde{\alpha}$ is an arbitrary preimage of $\alpha$ in $\LL^*$.
It is easy to see
that $\zeta$ is injective. Since~$\Gamma$ is a subgroup of $(\LL^*/\KK^*)_{tors}$, this implies that
$|\Gamma|$ divides~$n$.
\end{example}

\section{Linear algebraic groups}
\label{section:lin-alg-groups}

In this section we study finite subgroups
of  linear algebraic groups and prove
Theorem~\ref{theorem:main}.

Recall that a linear algebraic group $G$ over a field $\KK$ is a smooth closed subgroup scheme
of $\GL_N$ over $\KK$. In particular, the group $G(\KK)$ of its $\KK$-points
has a faithful finite-dimensional representation in a $\KK$-vector space.
We refer the reader to~\cite{Borel} and~\cite{Springer}
for the basics of the theory of linear algebraic groups.

A connected semi-simple algebraic group $G$ is said to be \emph{simply connected} if every central isogeny~\mbox{$\tilde G \to G$}, where $\tilde G$ is
a connected semi-simple group, is  necessarily  an isomorphism. Recall that every connected semi-simple  group $G$ admits a \emph{universal cover} which is a pair consisting of a connected semi-simple
simply connected group $\tilde G$ and a central isogeny~\mbox{$\tilde G \to G$}. The group scheme theoretic kernel of the latter isogeny
is called the  \emph{algebraic fundamental group}
of $G$ and is denoted by $\pi_1(G)$. This is a finite group scheme
whose order~$|\pi_1(G)|$ equals the order of the topological fundamental group of the
connected semi-simple group over $\mathbb{C}$ constructed from the the root datum of $G_{\bar{\KK}}$.

Let $H$ be a quasi-simple algebraic group over an algebraically closed field
(that is, $H$ has no proper infinite normal closed subgroups).
One defines the set $\mathcal{T}(H)$ of \emph{torsion primes of $H$} to be the empty set  if $H$ has type
$\mathrm{A}_n$ or $\mathrm{C}_n$. If $H$ has type $\mathrm{B}_n$, $\mathrm{D}_n$, or $\mathrm{G}_2$,
we set $\mathcal{T}(H)=\{2\}$; if  $H$ has type $\mathrm{F}_4$, $\mathrm{E}_6$, or $\mathrm{E}_7$, we set
$\mathcal{T}(H)= \{2, 3\}$; if  $H$ has type  $\mathrm{E}_8$, we set $\mathcal{T}(H)= \{2, 3, 5\}$.
Given any connected semi-simple algebraic group $G$ over a field $\KK$
we say that a prime $p$  is a \emph{torsion prime of $G$} if  $p$ is a torsion prime for some quasi-simple direct factor of $\tilde G_{\bar{\KK}}$, where $\tilde G$
is the universal cover of $G$.

Similarly to the case of an algebraically closed field,
many properties of linear algebraic groups are determined
by their maximal tori. Note that in general a linear algebraic group
$G$ over a non-algebraically closed field $\KK$
may contain non-isomorphic maximal tori,
but their dimension still equals the dimension
of maximal tori in $G_{\bar{\KK}}$, see~\mbox{\cite[Theorem~13.3.6(i)]{Springer}}
and~\cite[Remark~13.3.7]{Springer}; this dimension is called the \emph{rank} of~$G$.

Recall that an element $g \in G(\KK)$  is called semi-simple if  its image  in $\GL_N(\KK)$ is diagonalizable over an algebraic closure $\bar{\KK}$  of $\KK$. The notion of a
semi-simple element is intrinsic, that is,
it does not depend on the choice of $N$ and an
embedding~\mbox{$G \hookrightarrow \GL_N(\KK)$},  see~\mbox{\cite[\S2.4]{Springer}}.
The main tool that will allow
us to apply the results of~\S\ref{section:tori}
is the following theorem.

\begin{theorem}[{see~\cite[Corollary~13.3.8(i)]{Springer}}]
\label{theorem:Springerold}
Let $G$ be a
connected linear algebraic group over a field $\KK$, and
let $g\in G(\KK)$ be a  semi-simple  element. Then
there exists a torus $T\subset G$ such that
$g$ is contained in~$T(\KK)$.
\end{theorem}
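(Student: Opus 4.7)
The plan is to realize $T$ as a $\KK$-rational maximal torus of the connected centralizer $H := Z_G(g)^0$, exploiting the fact that $g$ lies in the center of $H$ by construction, so any maximal torus of $H$ will automatically contain~$g$.

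First, I would form $H = Z_G(g)^0$. Since $g \in G(\KK)$, the scheme-theoretic centralizer $Z_G(g)$ is defined over~$\KK$, and because $g$ is semi-simple it is smooth (a classical fact on centralizers of semi-simple elements); its identity component $H$ is therefore a connected linear algebraic group over~$\KK$, and $g \in Z(H)(\KK)$ tautologically. Next, I would invoke the general theorem that every connected linear algebraic group over a field admits a maximal torus defined over that field to choose a $\KK$-subtorus $T \subset H$ that is maximal in~$H$; this $T$ is in particular a torus of~$G$ defined over~$\KK$.

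It remains to verify that $g \in T(\KK)$. Passing to~$\bar{\KK}$, the classical statement that every semi-simple element of a connected linear algebraic group over an algebraically closed field lies in a maximal torus provides a maximal torus $T_0 \subset H_{\bar{\KK}}$ containing~$g$, and the conjugacy of maximal tori over an algebraically closed field produces $h \in H(\bar{\KK})$ with $T_0 = h\, T_{\bar{\KK}}\, h^{-1}$. Since $g$ is central in $H$ (and centrality persists under base change to~$\bar{\KK}$), conjugation by $h^{-1}$ fixes~$g$, so $g = h^{-1} g h \in T(\bar{\KK})$. Combined with $g \in G(\KK)$, this yields $g \in T(\bar{\KK}) \cap G(\KK) = T(\KK)$.

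The main obstacle is the existence of a $\KK$-rational maximal torus inside the possibly non-reductive connected $\KK$-group~$H$. Over an algebraically closed field this is elementary from standard Borel subgroup theory, but over a general, and especially a non-perfect, field the $\KK$-rationality requires substantial input going back to Grothendieck and Borel--Springer. Once this rationality is in hand, the descent of the classical maximal-torus result to $\KK$ via centrality of $g$ in $H$ and conjugacy of maximal tori over~$\bar{\KK}$ is essentially formal.
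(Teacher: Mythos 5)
The paper does not prove this statement at all: it is quoted verbatim from Springer, \emph{Linear Algebraic Groups}, Corollary~13.3.8(i), and your argument (reduce to $H=Z_G(g)^0$, invoke Grothendieck's theorem that $H$ has a maximal torus defined over $\KK$, then use centrality of $g$ in $H$ together with conjugacy of maximal tori over $\bar{\KK}$) is precisely the standard proof of that corollary, so it is correct and matches the cited source. The only imprecision is the word ``tautologically'': that $g$ lies in $Z_G(g)$ is tautological, but that it lies in the \emph{identity component} $H$ is not --- it follows from the classical fact over $\bar{\KK}$ that $g$ lies in a maximal torus of $G_{\bar{\KK}}$, which is connected and centralizes $g$; you should make this small step explicit before declaring $g\in Z(H)(\KK)$.
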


\begin{corollary}\label{cor:Springerold}
Let ~$G$ be a
connected linear algebraic group over a field $\KK$, and
let~\mbox{$g\in G(\KK)$} be a finite order  element whose order is not divisible by the characteristic of~$\KK$.  Then
there exists a torus $T\subset G$ such that
$g$ is contained in~$T(\KK)$.
\end{corollary}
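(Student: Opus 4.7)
The plan is to deduce this immediately from Theorem~\ref{theorem:Springerold} once we verify that a finite-order element whose order is prime to $\Char\KK$ is automatically semi-simple. So the single task is to check semi-simplicity of $g$.

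First I would fix a closed embedding $G\hookrightarrow \GL_N$ over $\KK$, which exists because $G$ is a linear algebraic group. Let $n$ be the order of $g$, and let $p=\Char\KK$ (allowing $p=0$); by hypothesis $p\nmid n$. Viewing $g$ as an element of $\GL_N(\KK)$, its minimal polynomial divides $x^n-1$. Since $p\nmid n$, the derivative $nx^{n-1}$ of $x^n-1$ is coprime to $x^n-1$ in $\bar{\KK}[x]$, so $x^n-1$ is separable and has $n$ distinct roots in $\bar{\KK}$. Hence the minimal polynomial of $g$ splits into distinct linear factors over $\bar{\KK}$, which means that $g$ is diagonalizable as an element of $\GL_N(\bar{\KK})$. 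By the intrinsic characterization of semi-simple elements recalled before Theorem~\ref{theorem:Springerold} (see~\cite[\S2.4]{Springer}), this shows that $g$ is a semi-simple element of $G(\KK)$, independently of the chosen embedding.

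Having established semi-simplicity, I would invoke Theorem~\ref{theorem:Springerold} directly: it produces a torus $T\subset G$ with $g\in T(\KK)$, which is precisely the conclusion. There is no real obstacle here beyond the separability argument for $x^n-1$; all the substance is packed into Theorem~\ref{theorem:Springerold}, and the role of the hypothesis $p\nmid\mathrm{ord}(g)$ is exactly to ensure semi-simplicity, which is the only input that Theorem~\ref{theorem:Springerold} needs beyond connectedness of $G$.
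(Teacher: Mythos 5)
Your proof is correct and matches the paper's intent exactly: the paper states this corollary without proof, treating it as an immediate consequence of Theorem~\ref{theorem:Springerold}, and the separability argument you supply (the minimal polynomial of $g$ divides $x^n-1$, which has distinct roots in $\bar{\KK}$ when $\Char\KK\nmid n$, so $g$ is diagonalizable and hence semi-simple) is precisely the omitted verification. Nothing to add.
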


For anisotropic reductive  groups over perfect  fields  and for simply connected  semi-simple  anisotropic  groups over arbitrary fields
whose characteristic is large enough, one has a stronger result.

\begin{theorem}[{see \cite[Corollary~3.8]{BorelTits}} and  {\cite[Corollary 2.6]{Tits86}}]
\label{theorem:Springer}
Let~$G$ be a
connected anisotropic reductive group over $\KK$.
Assume,  in addition,   that either $\KK$ is perfect, or $G$ is  semi-simple,  simply connected, and $\Char \KK = p>0$ is not a torsion prime for $G$.
Then, for every element $g\in G(\KK)$,
there exists a torus $T\subset G$ such that
$g$ is contained in~$T(\KK)$.
\end{theorem}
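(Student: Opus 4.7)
The plan is to reduce Theorem~\ref{theorem:Springer} to Theorem~\ref{theorem:Springerold} by showing that, under either of the stated hypotheses, every element $g\in G(\KK)$ is automatically semi-simple; once this is established, Springer's theorem immediately supplies the required torus~$T$ defined over~$\KK$ with $g\in T(\KK)$.

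For the perfect-field case the key input is a classical theorem of Borel and Tits: over a perfect field, every unipotent element of $G(\KK)$ lies in the unipotent radical of some parabolic $\KK$-subgroup of~$G$. Anisotropy of $G$ means, by the Borel--Tits theory of parabolic subgroups, that $G$ admits no proper parabolic $\KK$-subgroup at all (a proper parabolic would force a non-trivial $\KK$-split central torus of its Levi into $G$, contradicting anisotropy); hence the only unipotent element of $G(\KK)$ is the identity. Since $\KK$ is perfect, every $g\in G(\KK)$ admits a Jordan decomposition $g=g_s g_u$ with $g_s,g_u\in G(\KK)$ commuting; by the preceding step $g_u=1$, so $g=g_s$ is semi-simple and Theorem~\ref{theorem:Springerold} applies.

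For the simply connected semi-simple case in characteristic $p>0$ with $p\notin \cT(G)$, the Jordan decomposition need not descend to $G(\KK)$, so one must invoke the refinement of Borel--Tits due to Tits: under precisely these hypotheses, any unipotent $\KK$-point of $G$ still lies in the unipotent radical of some parabolic $\KK$-subgroup. Anisotropy of $G$ again eliminates non-trivial unipotent $\KK$-points. Combining this with the Galois equivariance of the Jordan decomposition of $g$ over $\KK^{sep}$ and a standard descent argument based on smoothness of $G$ (the Jordan parts of $g$ are individually $\Gal(\KK^{sep}/\KK)$-invariant, and the simply connected hypothesis excludes pathological descent obstructions), one concludes that the unipotent part of $g$ is trivial, whence $g$ is semi-simple and Theorem~\ref{theorem:Springerold} again applies.

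The main obstacle is the positive-characteristic case: Tits's refinement is a substantial structural result whose proof proceeds by a case-by-case analysis over the quasi-simple types, with the torsion-prime hypothesis playing an essential role — the obstructions to the statement are controlled by $p$-power isogenies precisely associated with the primes in $\cT(G)$, and the conclusion genuinely fails at those primes. For the purposes of this paper we would, as the authors do, cite \cite{BorelTits} and \cite{Tits86} as a black box and plug their conclusions into the semi-simplicity reduction described above.
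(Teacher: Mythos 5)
The paper does not actually prove this theorem: it is quoted as an external result, attributed to \cite[Corollary~3.8]{BorelTits} and \cite[Corollary~2.6]{Tits86}, and no argument is given. Your first paragraph and your perfect-field case are a correct account of how the Borel--Tits input combines with Theorem~\ref{theorem:Springerold}: anisotropy rules out proper parabolic $\KK$-subgroups, hence non-trivial unipotent elements of $G(\KK)$, and over a perfect field the Jordan components $g_s,g_u$ of any $g\in G(\KK)$ are themselves $\KK$-rational, so $g=g_s$ is semi-simple and Theorem~\ref{theorem:Springerold} supplies the torus. Up to that point you are reconstructing exactly what the cited corollary encodes.

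The gap is in the second case ($\KK$ non-perfect, $G$ semi-simple simply connected, $p$ not a torsion prime), where two things go wrong. First, ``$G(\KK)$ has no non-trivial unipotent elements'' does not imply ``every element of $G(\KK)$ is semi-simple'' over a non-perfect field, because the unipotent Jordan component $g_u$ of $g\in G(\KK)$ need not lie in $G(\KK)$ to begin with; eliminating unipotent $\KK$-points is therefore not enough to run your reduction. Second, the descent argument you sketch to repair this is not valid: $g_s$ and $g_u$ are indeed fixed by $\Gal(\KK^{sep}/\KK)$, but Galois-fixed points of $G(\bar{\KK})$ only land in $G(\KK^{p^{-\infty}})$, the perfect closure. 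The obstruction to $\KK$-rationality of the Jordan components is purely inseparable and hence invisible to the Galois group, and neither smoothness of $G$ nor simple connectedness removes it by any ``standard descent.'' This inseparability problem is precisely the difficulty that \cite{Tits86} is written to address, and its Corollary~2.6 has to be invoked in a form that directly yields the conclusion for arbitrary elements of an anisotropic group, not merely for unipotent ones fed into a semi-simplicity reduction. Since the paper itself treats the entire statement as a black-box citation, the end result is the same, but the intermediate argument you interpolate in the second case should be deleted or replaced by a direct appeal to the cited corollary.
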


\begin{corollary}\label{cor:Springer}
Under the assumptions of Theorem~\ref{theorem:Springer} the order of every finite order element of~$G(\KK)$
is not divisible by the characteristic of $\KK$.
\end{corollary}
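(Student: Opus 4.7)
The plan is to reduce the claim to a statement about tori and then invoke the elementary characteristic-$p$ observation already used at the start of the proof of Corollary~\ref{corollary:tori}. First I would dispose of the trivial case $\Char\KK = 0$. Assuming $\Char\KK = p > 0$, I take an element $g \in G(\KK)$ of finite order $d$ and apply Theorem~\ref{theorem:Springer} to obtain an algebraic torus $T \subset G$ with $g \in T(\KK)$. It then suffices to show that $T(\KK)$ contains no element of order $p$.

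For this last step I would use that $T$ splits over $\KK^{sep}$, so $T(\KK^{sep}) \cong \bigl((\KK^{sep})^{*}\bigr)^n$, where $n = \dim T$. Since $x^p - 1 = (x-1)^p$ in characteristic $p$, the multiplicative group $(\KK^{sep})^{*}$ is $p$-torsion free, hence so is $T(\KK^{sep})$ and in particular its subgroup $T(\KK)$. This forces $p \nmid d$, which is the required conclusion.

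I do not foresee any genuine obstacle: the only non-trivial input is Theorem~\ref{theorem:Springer}, and once it has been applied, the final step is a one-line observation that uses neither the anisotropy of the subtorus $T$ (which in any case is inherited from $G$) nor any hypothesis on roots of unity in $\KK$. The argument works uniformly in both branches of the hypothesis of Theorem~\ref{theorem:Springer}, namely whether $\KK$ is perfect or whether $G$ is simply connected semi-simple with $p$ not a torsion prime, because we only exploit the existence of the enveloping torus that the cited theorem provides.
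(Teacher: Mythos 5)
Your proposal is correct and follows essentially the same route as the paper: reduce to a subtorus via Theorem~\ref{theorem:Springer} and then use that $T(\KK^{sep})\cong\bigl((\KK^{sep})^*\bigr)^n$ has no $p$-torsion, which is exactly the observation opening the proof of Corollary~\ref{corollary:tori} that the paper cites. Your remark that neither anisotropy of $T$ nor roots of unity is needed for this last step is accurate but does not change the substance of the argument.
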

\begin{proof}
By Theorem~\ref{theorem:Springer}  it suffices to prove the assertion in the case when $G$ is a torus,
in which case it is given by Corollary~\ref{corollary:tori}.
\end{proof}

Note that over fields of positive characteristic non-reductive linear algebraic groups
may have unbounded finite subgroups. For instance, the $p$-torsion subgroup of $\Ga$ over an infinite
field of characteristic $p$
is an infinite-dimensional vector space over the field~$\mathbf{F}_p$ of~$p$ elements.
However, this example is in a certain sense the only source of unboundedness
for unipotent groups.

\begin{lemma}\label{lemma:unipotent-torsion}
Let $G$ be a unipotent group over a field $\KK$. If $\Char \KK=0$, then the group~$G(\KK)$ does not contain elements of finite order greater than $1$.
If $\Char \KK=p>0$, then~$G(\KK)$  is a $p$-primary torsion group.
\end{lemma}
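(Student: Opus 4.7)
The plan is to reduce to the case of $\GL_N$ via a faithful representation, and then argue directly with nilpotent matrices. Concretely, I would choose a closed embedding $G \hookrightarrow \GL_N$ for some $N$, so that $G(\KK) \subset \GL_N(\KK)$, and use the fact that every element of $G$ is unipotent as an element of $\GL_N$. This means each $g \in G(\KK)$ has the form $g = 1 + X$ with $X$ a nilpotent matrix satisfying $X^N = 0$.

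For $\Char \KK = p > 0$, the key observation is that $1$ and $X$ commute, and $p \mid \binom{p}{i}$ for every $0 < i < p$. The binomial theorem then gives $(1+X)^p = 1 + X^p$ in characteristic $p$, and iterating yields
\[
g^{p^k} = (1+X)^{p^k} = 1 + X^{p^k}.
\]
As soon as $p^k \ge N$ we have $X^{p^k} = 0$, so $g^{p^k} = 1$. Hence every element of $G(\KK)$ has order a power of $p$, showing that $G(\KK)$ is a $p$-primary torsion group.

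For $\Char \KK = 0$, if $g \in G(\KK)$ satisfies $g^n = 1$ for some $n \ge 1$, then the minimal polynomial of $g$ divides $x^n - 1$. In characteristic~$0$ the polynomial $x^n - 1$ is separable (its derivative $nx^{n-1}$ is coprime to it), so $g$ is diagonalizable over $\bar{\KK}$. Being simultaneously unipotent and diagonalizable, $g$ must equal the identity.

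The reduction step is the only point where I would appeal to general theory: any linear algebraic group $G$ comes with a faithful representation $G \hookrightarrow \GL_N$, and when $G$ is unipotent every element is unipotent as a matrix in $\GL_N$ by definition. No deeper structural result on unipotent groups (such as embedding into the group of upper-triangular unipotent matrices) is actually required for the argument above, although such an embedding would also work.
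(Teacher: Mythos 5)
Your proof is correct, but it takes a genuinely different route from the paper's. The paper reduces to the case $\KK=\bar{\KK}$, invokes the embedding of $G$ into the group $U_n$ of unipotent upper-triangular matrices (Springer, Prop.~2.4.12), and then deduces the statement from the fact that $U_n$ is an iterated extension of copies of $\Ga$, whose groups of points are torsion-free in characteristic $0$ and $p$-torsion in characteristic $p$. You instead work with an arbitrary faithful representation and argue directly on the nilpotent part $X=g-1$: the Frobenius--binomial identity $(1+X)^{p^k}=1+X^{p^k}$ in characteristic $p$, and separability of $x^n-1$ plus ``unipotent and diagonalizable implies identity'' in characteristic $0$. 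Your version is more self-contained (it avoids the structure theory of $U_n$ and the extension argument) and even gives a quantitative refinement the paper does not state, namely that the exponent of $G(\KK)$ divides $p^{\lceil \log_p N\rceil}$. The one place you are slightly cavalier is the phrase ``by definition'': that every element of a unipotent group is a unipotent matrix under any faithful representation is, depending on the convention adopted, either the definition or a small theorem (it is the definition in Springer's book, which the paper uses, so no real gap); it is worth noting that unipotence of an element is intrinsic, i.e.\ independent of the chosen embedding into $\GL_N$, which is the same point the paper makes for semisimple elements.
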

\begin{proof}
Without  loss of generality we may assume that $\KK$ is algebraically closed.  Then~$G$ is isomorphic to a closed subgroup of the group $U_n$ of unipotent upper triangular matrices (see e.g.~\cite[Proposition~2.4.12]{Springer}). Thus, we may assume that $G=U_n$. The lemma follows since $U_n$ can be obtained as a consecutive
extension of groups isomorphic to~$\Ga$.
\end{proof}

We will need the following auxiliary fact about
orders of finite groups with given exponents proved in~\cite{HerzogPraeger}.

\begin{theorem}
\label{theorem:Burnside}
Let $n$ and $d$ be positive integers,
and let $\KK$ be a field.
Let~\mbox{$\Gamma\subset\GL_n(\KK)$} be a finite subgroup.  If $\Char\KK>0$, denote by~$|\Gamma|'$
the largest factor of $|\Gamma|$ which is not divisible by
$\Char\KK$; otherwise put~\mbox{$|\Gamma|'=|\Gamma|$}.
Suppose that
for every~\mbox{$g\in \Gamma$} such that the order of $g$
is not divisible by the characteristic of $\KK$,  one has $g^d=1$.
Then~$|\Gamma|'$ divides~$d^n$.
\end{theorem}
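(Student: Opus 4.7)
The plan is to reduce the statement to a bound on $\ell$-subgroups for each prime $\ell$ different from the characteristic $p$ of $\KK$ (with the convention that if $\Char\KK=0$ this means all primes). First, extending scalars to $\bar{\KK}$ does not change $\Gamma$ as an abstract group and preserves the hypothesis, so I may assume $\KK=\bar{\KK}$. Since $|\Gamma|'$ is coprime to $p$, we have $|\Gamma|' = \prod_{\ell\neq p} |P_\ell|$, where $P_\ell$ denotes a Sylow $\ell$-subgroup of $\Gamma$. Every element of $P_\ell$ is a $p'$-element and therefore satisfies $g^d=1$ by hypothesis, so $P_\ell$ has exponent dividing $\ell^{v_\ell(d)}$. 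It thus suffices to establish the following refined claim: \emph{for every prime $\ell\neq\Char\bar\KK$ and every finite $\ell$-subgroup $P\subset \GL_n(\bar\KK)$ of exponent dividing $\ell^k$, the order $|P|$ divides $\ell^{nk}$.} Applying this to each $P_\ell$ with $k=v_\ell(d)$ and multiplying over $\ell\neq p$ then yields $|\Gamma|'\mid d^n$.

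I would prove the refined claim by induction on $n$. The base case $n=1$ is clear: a finite $\ell$-subgroup of $\bar\KK^*$ of exponent dividing $\ell^k$ sits inside the group of $\ell^k$-th roots of unity, which has order $\ell^k$. For the inductive step, since $\ell$ is coprime to $\Char\bar\KK$, Maschke's theorem gives complete reducibility of $V=\bar\KK^n$ as a $P$-module. If $V$ splits nontrivially as $V=V_1\oplus V_2$ with $\dim V_i=n_i$, then $P$ embeds into the product of its images $P_i\subset \GL(V_i)$, each of which is an $\ell$-group of exponent dividing $\ell^k$. The inductive hypothesis gives $|P_i|\mid \ell^{n_ik}$, hence $|P|\mid \ell^{nk}$. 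It remains to treat the case where $V$ is an irreducible $P$-module.

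If $V$ is $P$-irreducible and $P$ is abelian, then Schur's lemma forces $n=1$, handled by the base case. If $P$ is non-abelian, I would pick a normal abelian subgroup $A\triangleleft P$ properly containing the center $Z(P)$; such an $A$ exists in any non-abelian finite $\ell$-group (take the preimage of a central subgroup of order $\ell$ in $P/Z(P)$). Clifford's theorem splits $V|_A$ into isotypic components $U_1,\ldots,U_m$ transitively permuted by $P$, and $m>1$ since $A$ does not act on $V$ by scalars. Setting $H=\mathrm{Stab}_P(U_1)$, one has $[P:H]=m$, the group $H$ acts on $U_1$ of dimension $n/m<n$, and $H$ still has exponent dividing $\ell^k$. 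The main obstacle lies here: one must combine the inductive bound on the image of $H$ in $\GL(U_1)$ with the index $m$ and a careful analysis of the kernel of $H\to\GL(U_1)$, which acts nontrivially on the other isotypic components via $P$-conjugation, to obtain exactly $|P|\mid\ell^{nk}$ rather than a weaker multiple. This delicate Clifford-theoretic bookkeeping is where the Herzog--Praeger argument does the real work, and it is the step I would spend most of the effort on.
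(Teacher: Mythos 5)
Your reduction steps are correct and in fact coincide with what the paper itself does: the passage to Sylow $\ell$-subgroups for $\ell\neq\Char\KK$, the observation that each such subgroup has exponent dividing $\ell^{v_\ell(d)}$, and the multiplication over primes to convert prime-power inequalities into the divisibility $|\Gamma|'\mid d^n$ are exactly the content of the paper's (very short) proof. But the paper does \emph{not} prove the core bound itself --- it quotes \cite[Theorem~1]{HerzogPraeger} for the inequality $|\Gamma|'\le d^n$ and only adds the Sylow bootstrapping on top. Your proposal instead undertakes to reprove that core bound, and this is where it has a genuine gap: you explicitly leave open the irreducible non-abelian (imprimitive) case, and the difficulty there is real, not a routine omission. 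In your setup Clifford theory gives $V=\mathrm{Ind}_H^P U_1$ with $m=[P:H]>1$ and $\dim U_1=n/m$; the natural bookkeeping (bounding the images of $H$ in $\GL(U_1)$ and in $\GL(U_2\oplus\cdots\oplus U_m)$ separately by induction and multiplying) yields only $|H|\mid \ell^{(n/m)k}\cdot\ell^{(n-n/m)k}=\ell^{nk}$, hence $|P|=m\,|H|\mid m\,\ell^{nk}$, which overshoots the target by the factor $m$ (itself a positive power of $\ell$). Eliminating that factor is precisely the nontrivial content of the Herzog--Praeger argument, and your sketch indicates no mechanism for doing so.

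To repair this, either cite \cite[Theorem~1]{HerzogPraeger} as a black box for the inequality $|\Gamma|'\le d^n$ --- after which your Sylow argument finishes the proof exactly as in the paper --- or genuinely carry out the missing analysis of the imprimitive case (one must show that the loss of the factor $m$ in the index is compensated by a corresponding gain elsewhere, e.g.\ in the order or exponent of the image of $H$ on a single block, which is what the cited paper's argument accomplishes). As written, the proof is not complete.
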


\begin{proof}
It is proved in \cite[Theorem~1]{HerzogPraeger} that under the above assumptions
one has~\mbox{$|\Gamma|'\le d^n$}. Applying this to the $q$-Sylow subgroups $\Gamma_q$
for all primes $q\neq \Char\KK$, we see that $|\Gamma_q|\le d_q^n$, where $d_q$ is the largest
power of $q$ dividing $d$. This immediately implies that $|\Gamma_q|$ divides $d_q^n$,
and hence $|\Gamma|'=\prod_q |\Gamma_q|$ divides the number $\prod_q d_q$, which
in turn divides~$d$.
\end{proof}

Now we state and prove a more precise version of Theorem~\ref{theorem:main}.

\begin{theorem}\label{theorem:LAG}
Let $r$ and $n$ be positive integers.
Let $\KK$ be a field that contains all roots of~$1$,
and let $G$ be an anisotropic linear algebraic group
over $\KK$ such that the number of $\KK$-points of the group of connected components of $G$ is at most~$r$ and the rank of $G$ is at most $n$.
Denote by $N(G)$  the minimal dimension of a faithful representation of the maximal reductive quotient of
the neutral component of~$G_{\bar{\KK}}$.
Let $\Gamma$ be a finite subgroup of~$G(\KK)$.
The following assertions hold.
\begin{itemize}
\item[(i)]
If $G$ is reductive and   $\KK$ is perfect, then  $|\Gamma|$ divides~$r\Upsilon_M(n)^{N(G)}$.

\item[(ii)]
Suppose that $G$ is an arbitrary linear algebraic group. If $\Char\KK>0$, denote by~$|\Gamma|'$
the largest factor of $|\Gamma|$ which is not divisible by
$\Char\KK$; otherwise put~\mbox{$|\Gamma|'=|\Gamma|$}. Then~$|\Gamma|'$ divides~$r \Upsilon_M(n)^{N(G)}$.

\item[(iii)] Assume that $G$  is  connected,  semi-simple, and  $\Char \KK=p>0$  is not a torsion prime for $G$.
Write~\mbox{$|\pi_1(G)| = l p^m$},  for some non-negative integers $m$ and $l$ such that $l$ is not divisible by~$p$.
Then $\Gamma$
is a semi-direct product $\Gamma=\Gamma_1\rtimes \Gamma_2$ of its normal subgroup $\Gamma_1$ whose order is not divisible by~$p$,
and an abelian $p$-group $\Gamma_2$ of exponent less than or equal to~$p^m$. Moreover,  $|\Gamma_1|$ divides~$r \Upsilon_M(n)^{N(G)}$.
\end{itemize}
\end{theorem}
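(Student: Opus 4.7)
The plan is to derive all three parts from a single skeleton. Since $\Gamma / \Gamma^0$, with $\Gamma^0 := \Gamma \cap G^0(\KK)$, injects into $(G/G^0)(\KK)$, its order divides $r$, so it suffices to bound $|\Gamma^0|$ (or its $p'$-part). To turn $\Gamma^0$ into a finite subgroup of $\GL_{N(G)}$, I would fix a faithful embedding of the maximal reductive quotient $H := G^0_{\bar{\KK}}/U$ into $\GL_{N(G)}$ over $\bar{\KK}$ and form the composition
$$\rho\colon G^0(\bar{\KK}) \twoheadrightarrow H(\bar{\KK}) \hookrightarrow \GL_{N(G)}(\bar{\KK}),$$
whose kernel $U(\bar{\KK})$ has no non-trivial element of order coprime to $\Char\KK$ by Lemma~\ref{lemma:unipotent-torsion}. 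Hence $\ker(\rho|_{\Gamma^0})$ is a finite $p$-group and $|\Gamma^0|' = |\rho(\Gamma^0)|'$.

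For any $\bar g \in \rho(\Gamma^0)$ of order $d$ prime to $p = \Char\KK$, I would lift it to some $g \in \Gamma^0$ and let $p^a$ be the $p$-part of the order of $g$. Then $g^{p^a} \in G^0(\KK)$ has order coprime to $p$ and divisible by $d$, so Corollary~\ref{cor:Springerold} places it in a torus $T \subset G^0$; as a subgroup of the anisotropic $G$, the torus $T$ is anisotropic, and its rank is at most $n$, so Corollary~\ref{corollary:tori} yields $d \mid \Upsilon_M(n)$. Theorem~\ref{theorem:Burnside} applied to $\rho(\Gamma^0) \subset \GL_{N(G)}(\bar{\KK})$ with exponent bound $\Upsilon_M(n)$ then gives $|\rho(\Gamma^0)|' \mid \Upsilon_M(n)^{N(G)}$, and multiplying by the component factor $r$ proves~(ii). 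Part~(i) follows immediately: under its stronger hypotheses Theorem~\ref{theorem:Springer} applies directly to $G^0$, so Corollary~\ref{cor:Springer} forces every finite order element of $G^0(\KK)$ to have order prime to $p$, whence $|\Gamma^0|' = |\Gamma^0|$ and the required divisibility holds without passing to the $p'$-part.

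For~(iii) I would exploit the central isogeny $\tilde G \twoheadrightarrow G$ with kernel $Z = \pi_1(G)$ of order $lp^m$. Since $\tilde G$ is simply connected semi-simple anisotropic and $\Char\KK$ is not a torsion prime of $G$, Theorem~\ref{theorem:Springer} applies to $\tilde G$, and Corollary~\ref{cor:Springer} shows that $\tilde G(\KK)$ has no $p$-torsion. Centrality of $Z$ makes the connecting map
$$\delta\colon G(\KK) \longrightarrow H^1(\KK,Z)$$
a group homomorphism into an abelian group of exponent dividing $|Z| = lp^m$. Setting $\Gamma_0 := \ker(\delta|_\Gamma)$, each element of $\Gamma_0$ lifts to a finite order element of $\tilde G(\KK)$ and is therefore a $p'$-element, so $\Gamma_0$ is a normal $p'$-subgroup of~$\Gamma$. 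Decomposing the abelian quotient $\Gamma/\Gamma_0 \hookrightarrow H^1(\KK,Z)$ into its $p$- and $p'$-primary parts and letting $\Gamma_1 \supset \Gamma_0$ be the preimage of the $p'$-part, $\Gamma_1$ is a normal $p'$-subgroup with $\Gamma/\Gamma_1$ an abelian $p$-group of exponent at most~$p^m$. Schur--Zassenhaus furnishes a complement $\Gamma_2 \cong \Gamma/\Gamma_1$, which is therefore abelian with the required exponent bound, while $|\Gamma_1|$ is controlled by~(ii) applied with $r = 1$.

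The step that needs the most care is the setup of~(iii): one must verify that the connecting map $\delta$ is a genuine group homomorphism (which holds precisely because $Z$ is central in $\tilde G$) and that $H^1(\KK,Z)$, computed in flat cohomology, is abelian and annihilated by $|Z|$, since $Z$ need not be \'etale when $p$ divides~$|Z|$.
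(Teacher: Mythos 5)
Your argument is correct and follows essentially the same route as the paper: Borel--Tits/Springer to place prime-to-$p$ torsion elements in anisotropic subtori, Corollary~\ref{corollary:tori} for the exponent bound $\Upsilon_M(n)$, the quotient by the unipotent radical together with the Herzog--Praeger bound (Theorem~\ref{theorem:Burnside}) for the order, and the central isogeny with fppf $H^1$ for part~(iii). The only (harmless) deviations are that you transfer the exponent bound to $\rho(\Gamma^0)$ by explicitly lifting elements and passing to their prime-to-$p$ parts, and that in~(iii) you work with the single extension with kernel $\pi_1(G)$ and split $H^1$ into primary components, whereas the paper uses two successive central extensions with kernels $Z'$ and $Z$.
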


\begin{proof}
Clearly, we may assume that $G$ is connected, so that in particular $r=1$. Also, we assume that the
rank of~$G$ equals~$n$.
Note that assertion~(i) follows from  assertion~(ii) and
Corollary~\ref{cor:Springer}.

Let $g\in G(\KK)$ be an element of finite order not divisible by $\Char\KK$.
Then, by Corollary~\ref{cor:Springerold}, the element
$g$ is contained in some  subtorus of $G$.
Thus,  it follows from Corollary~\ref{corollary:tori}
that
$$
g^{\Upsilon_M(n)} =1.
$$

Let~\mbox{$R_u(G_{\bar{\KK}})$}
be the unipotent radical of~$G_{\bar{\KK}}$. (Note that unless $\KK$ is perfect the group~\mbox{$R_u(G_{\bar{\KK}})$}  need not be defined over $\KK$.)
Then $G_{\bar{\KK}}/R_u(G_{\bar{\KK}})$ is a
reductive group over $\bar{\KK}$.
By assumption,
the group~\mbox{$(G_{\bar{\KK}}/R_u(G_{\bar{\KK}}))(\bar{\KK})$}
admits a faithful representation in an $N(G)$-dimensional vector space over $\bar{\KK}$:
$$
(G_{\bar{\KK}}/R_u(G_{\bar{\KK}}))(\bar{\KK}) \hookrightarrow \GL_{N(G)}(\bar{\KK}).
$$
Composing this embedding  with the projection  $G(\KK) \to (G_{\bar{\KK}}/R_u(G_{\bar{\KK}}))(\bar{\KK})$ we construct a homomorphism
$$
\phi\colon G(\KK) \to \GL_{N(G)}(\bar{\KK}),
$$
whose kernel is contained in  $R_u(G_{\bar{\KK}})(\bar{\KK})$.
By Lemma~\ref{lemma:unipotent-torsion}, every element of finite order in~\mbox{$R_u(G_{\bar{\KK}})(\bar{\KK})$} has order divisible by~$\Char\KK$.
This means that the image $\phi(\Gamma)$ of a finite subgroup~\mbox{$\Gamma\subset G(\KK)$} in  $\GL_{N(G)}(\bar{\KK})$
  has order divisible by the largest factor $|\Gamma|'$
of $|\Gamma|$ not divisible by~$\Char\KK$; in particular,
if $\Char\KK=0$, then $\Gamma$ projects isomorphically to~\mbox{$(G_{\bar{\KK}}/R_u(G_{\bar{\KK}}))(\bar{\KK})$}.
Theorem~\ref{theorem:Burnside} applied to $\phi(\Gamma)$ implies that $|\Gamma|'$ divides $\Upsilon_M(n)^{N(G)}$.
This proves assertion~(ii).

For the proof of assertion~(iii), observe that since the group scheme $\pi_1(G)$ is commutative, we have that $ \pi_1(G)\cong Z \times Z'$,  where $Z$ is a group scheme of order~$p^m$
and  $Z'$ is a group scheme whose order is not divisible by~$p$.
The central extensions
\[
\aligned
& Z' \to \tilde G \to \tilde G/Z',\\
&Z \to \tilde G /Z' \to G
\endaligned
\]
give rise to the exact sequences of groups
\[
\aligned
& Z'(\KK)   \rar{}  \tilde G (\KK) \rar{} (\tilde G / Z') (\KK) \rar{} H^1_{fl}(\Spec \KK, Z'),\\
& Z(\KK)   \rar{}  (\tilde G /Z')(\KK) \rar{} G(\KK) \rar{N} H^1_{fl}(\Spec \KK, Z),
\endaligned
\]
where the groups on the right stand for cohomology of $Z'$ and $Z$  regarded as  sheaves for the
fppf  topology on $\Spec \KK$, and $H^1_{fl}$
denotes the first cohomology group
for the fppf  topology (see, for example, \cite[\S\,III.4]{Milne1980}).
Set $\Gamma_1=\Gamma \cap  \ker N$.  By Corollary \ref{cor:Springer} the group  $\tilde G (\KK)$ has no elements of order $p$. Since the multiplication by $p$ is invertible in~$Z'$, the same is true for   $H^1_{fl}(\Spec \KK, Z')$. Hence $\Gamma_1$ has no elements of order $p$. Thus, by assertion~(ii) the order of~$\Gamma_1$ divides~$\Upsilon_M(n)^{N(G)}$.
On the other hand, by construction~$\Gamma_1$ is a normal subgroup of $\Gamma$, and $\Gamma/\Gamma_1$ is a subgroup of~\mbox{$H^1_{fl}(\Spec \KK, Z)$}. The latter is an abelian group annihilated by~$p^m$.  Hence, the same is true for $\Gamma/\Gamma_1$. Finally, since $\Gamma_1$ has no elements of order $p$,  a $p$-Sylow subgroup of $\Gamma$ projects  isomorphically to~$\Gamma/\Gamma_1$.
Thus, the group~$\Gamma$ is isomorphic to a semi-direct product of $\Gamma_1$ and~$\Gamma/\Gamma_1$.
\end{proof}

Recall that for every positive integer $n$  there exists a finite collection of  reductive group schemes
$\mathcal{G}_1,\ldots,\mathcal{G}_{r(n)}$ over $\Spec \mathbb{Z}$, such that  every connected reductive group of rank at most $n$  over
an algebraically closed field $\KK$ can be obtained from one of $\mathcal{G}_i$'s via the base change along the morphism $\Spec \KK \to \Spec \mathbb{Z}$
(see~\mbox{\cite[Theorem~XXV.1.1]{SGA3}}).
By~\mbox{\cite[Proposition~VI\textsubscript{B}.13.2]{SGA3}}, every such  $\mathcal{G}_i$ admits an embedding into a group~$\GL_{N_i}$ over $\Spec \mathbb{Z}$
for some positive integer~$N_i$.
Thus, there exists a number $N(n)$ with the following property: for every
algebraically closed field $\KK$ and every connected reductive group~$G$ of rank at most $n$
over $\KK$, the group $G(\KK)$ has a faithful representation of dimension at most~$N(n)$.

\begin{corollary}\label{corollary:LAG}
In the notation of assertions (i), (ii), and~(iii) of Theorem~\ref{theorem:LAG}, the orders $|\Gamma|$, $|\Gamma|'$, and
$|\Gamma_1|$, respectively, divide the number~$r\Upsilon_M(n)^{N(n)}$.
\end{corollary}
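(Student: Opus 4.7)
The plan is to deduce this corollary almost immediately from Theorem~\ref{theorem:LAG} together with the universal bound $N(n)$ on faithful representation dimensions that is constructed in the paragraph preceding the corollary statement. The only non-trivial point is to verify that $N(G) \le N(n)$ under the hypothesis that the rank of $G$ is at most $n$; once this is checked, each of the divisibility claims of Theorem~\ref{theorem:LAG} is automatically strengthened since $\Upsilon_M(n)^{N(G)}$ divides $\Upsilon_M(n)^{N(n)}$ whenever $N(G) \le N(n)$.

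First, I would unpack the definition of $N(G)$: it is the minimal dimension of a faithful representation of the maximal reductive quotient $H := G^0_{\bar{\KK}}/R_u(G^0_{\bar{\KK}})$ of the neutral component of $G_{\bar{\KK}}$. To compare $N(G)$ with $N(n)$, I would check that the rank of $H$ is at most $n$. Since the rank of a linear algebraic group coincides with the dimension of any maximal torus, and passing to the neutral component does not alter this dimension, $G^0_{\bar{\KK}}$ has rank at most $n$. Moreover, a unipotent normal subgroup contains no non-trivial torus, so the quotient map $G^0_{\bar{\KK}} \to H$ induces an isomorphism on maximal tori. Hence $H$ is a connected reductive group over $\bar{\KK}$ of rank at most $n$.

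Next, I would invoke the construction sketched before the corollary: there is a finite list of reductive group schemes $\mathcal{G}_1, \ldots, \mathcal{G}_{r(n)}$ over $\Spec \ZZ$ exhausting all isomorphism types of connected reductive groups of rank at most $n$ over any algebraically closed field (by~\cite[Theorem~XXV.1.1]{SGA3}), each admitting an embedding into some $\GL_{N_i}$ over $\Spec \ZZ$ (by~\cite[Proposition~VI\textsubscript{B}.13.2]{SGA3}). Taking $N(n) = \max_i N_i$ gives a uniform faithful representation dimension, and in particular $N(G) \le N(n)$.

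Finally, applying Theorem~\ref{theorem:LAG} in each of the three cases yields that $|\Gamma|$, $|\Gamma|'$, or $|\Gamma_1|$ divides $r \Upsilon_M(n)^{N(G)}$, and this number in turn divides $r \Upsilon_M(n)^{N(n)}$, giving the corollary. I do not anticipate a real obstacle here; the only point requiring a moment's care is the behaviour of the rank under passage to the neutral component and to the reductive quotient, which is straightforward.
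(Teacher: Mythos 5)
Your overall route is the same as the paper's: reduce to showing that the maximal reductive quotient $H = G^{0}_{\bar{\KK}}/R_u(G^{0}_{\bar{\KK}})$ is a connected reductive group of rank at most $n$, then feed this into the construction of $N(n)$ preceding the corollary and conclude via Theorem~\ref{theorem:LAG}. The weak point is the one substantive step, the rank comparison. You assert that the quotient map $G^{0}_{\bar{\KK}} \to H$ ``induces an isomorphism on maximal tori'' because a normal unipotent subgroup contains no non-trivial torus. That observation only shows that a maximal torus of $G^{0}_{\bar{\KK}}$ meets $R_u(G^{0}_{\bar{\KK}})$ trivially and hence maps isomorphically onto a torus of $H$; this gives $\rk(H)\ge \rk(G)$, which is the inequality you do \emph{not} need. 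The corollary requires the reverse inequality $\rk(H)\le \rk(G)\le n$, and a priori nothing in your argument excludes that $H$ contains a torus strictly larger than the image of every torus of $G^{0}_{\bar{\KK}}$. (The statement that images of maximal tori under surjections of connected groups are again maximal tori is true, but it is a theorem, not a consequence of the triviality of $T\cap R_u$.)

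To close the gap one must lift maximal tori from $H$ back to $G^{0}_{\bar{\KK}}$, and this is exactly what the paper does: the preimage in $G^{0}_{\bar{\KK}}$ of a maximal torus $\bar T\subset H$ is an extension of $\bar T$ by the connected unipotent group $R_u(G^{0}_{\bar{\KK}})$, and by~\cite[Theorem~10.6(4)]{Borel} every such extension over $\bar{\KK}$ admits a section, producing a torus of $G^{0}_{\bar{\KK}}$ of dimension $\dim\bar T$. Hence $\rk(H)\le\rk(G^{0}_{\bar{\KK}})=\rk(G)\le n$. With this supplied, the remainder of your argument --- the finitely many reductive models over $\Spec\ZZ$ from~\cite{SGA3}, their embeddings into the groups $\GL_{N_i}$, the resulting bound $N(G)\le N(n)$, and the final application of Theorem~\ref{theorem:LAG} --- is correct and coincides with the paper's proof.
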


\begin{proof}
Let us use the notation of the proof of Theorem~\ref{theorem:LAG}.
The group~\mbox{$G_{\bar{\KK}}/R_u(G_{\bar{\KK}})$}
has the same rank as~$G_{\bar{\KK}}$
(which is equal to the rank of~$G$).
Indeed, the rank of a unipotent algebraic group  is zero. Hence, the rank of~\mbox{$G_{\bar{\KK}}/R_u(G_{\bar{\KK}})$} is greater than or equal to  the rank of~$G_{\bar{\KK}}$. On the other hand, by~\cite[Theorem~10.6(4)]{Borel},
every extension of a torus by
a connected unipotent group over $\bar{\KK}$ admits a section, which means that the rank  of~$G_{\bar{\KK}}$ is greater than or equal to the rank of the quotient~\mbox{$G_{\bar{\KK}}/R_u(G_{\bar{\KK}})$}.
On the other hand,~\mbox{$G_{\bar{\KK}}/R_u(G_{\bar{\KK}})$} is a connected reductive group.
Therefore, it admits a faithful representation in an $N(n)$-dimensional vector space over~$\bar{\KK}$.
\end{proof}

\section{Severi--Brauer varieties}
\label{section:SB}

In this section
we describe automorphism groups of Severi--Brauer varieties and prove Theorem~\ref{theorem:SB},
as well as some more special results for  Severi--Brauer varieties over non-perfect fields.
 We refer the reader to \cite{Artin1982} for the definition and basic
facts concerning Severi--Brauer varieties.

Let $A$ be a central simple algebra of dimension $n^2$
over an arbitrary field $\KK$, and let~$\mathcal{A}^*$ be the corresponding inner form of  the algebraic group $\GL_n$. By definition, for every  scheme~$S$ over $\KK$, the group $\mathcal{A}^*(S)$ is the group of invertible elements
in the algebra~\mbox{$A \otimes_\KK  \mathcal{O}(S)$}.  Also denote by $\mathcal{A}^*/\G_m$ the quotient of $\mathcal{A}^*$ by its center. The latter is an inner form of~$\PGL_n$.
Let $X$ be the
Severi--Brauer variety corresponding to $A$.  Recall that~$X$ represents the functor that takes a scheme $S$ over $\KK$ to the set of right ideals $I$  in the sheaf of algebras~\mbox{$A\otimes _{\KK} \mathcal{O}_{S}$} which are locally free of
rank~$n$ as $\mathcal{O}_{S}$-modules and such that the quotient~\mbox{$(A\otimes _{\KK} \mathcal{O}_{S})/I$} is also locally free.
The action of the group~\mbox{$\mathcal{A}^*(S)$} on~\mbox{$A \otimes_\KK  \mathcal{O}_{S}$} by conjugation induces a homomorphism
\begin{equation}\label{grschofautsb}
\mathcal{A}^*/\G_m \to \underline{\Aut}(X),
\end{equation}
where the target is the group scheme of automorphisms of $X$.

The following fact is well known to experts (cf. Theorem~E on page~266
of~\cite{Chatelet}, or~\cite[\S1.6.1]{Artin1982}),
but for the reader's
convenience we provide a proof.

\begin{lemma}\label{lemma:SB-Aut}
Homomorphism~\eqref{grschofautsb} is an isomorphism. Moreover, it induces an isomorphism $\Aut(X)\cong A^*/\KK^*$.
\end{lemma}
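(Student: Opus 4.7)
The plan is to verify both assertions by base-changing to a separable closure, where everything becomes the standard isomorphism $\PGL_n \cong \Aut(\PP^{n-1})$, and then to descend.

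First I would show that homomorphism~\eqref{grschofautsb} becomes an isomorphism after base change to $\KK^{sep}$. Choose an isomorphism $A\otimes_{\KK}\KK^{sep}\cong \End_{\KK^{sep}}(V)$ with $V\cong(\KK^{sep})^n$. Under this identification $\mathcal{A}^*_{\KK^{sep}}\cong \GL(V)$ and $(\mathcal{A}^*/\Gm)_{\KK^{sep}}\cong \PGL(V)$, while $X_{\KK^{sep}}\cong \PP(V)$. A right ideal of rank $n$ in $\End(V)\cong V\otimes V^*$ is exactly of the form $L\otimes V^*$ for a one-dimensional subspace $L\subset V$, and the conjugation action of $g\in\GL(V)$ sends $L\otimes V^*$ to $g(L)\otimes V^*$. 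Thus the base change of~\eqref{grschofautsb} is the classical isomorphism $\PGL(V)\iso \Aut(\PP(V))$, so it is indeed an isomorphism of $\KK^{sep}$-group schemes. Since source and target of~\eqref{grschofautsb} are $\KK$-group schemes of finite type and the map becomes an isomorphism after the faithfully flat base change $\Spec\KK^{sep}\to\Spec\KK$, faithfully flat descent implies that~\eqref{grschofautsb} itself is an isomorphism.

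Next I would compute $\Aut(X)=\underline{\Aut}(X)(\KK)=(\mathcal{A}^*/\Gm)(\KK)$ using the short exact sequence
\[
1\longrightarrow \Gm \longrightarrow \mathcal{A}^* \longrightarrow \mathcal{A}^*/\Gm \longrightarrow 1
\]
of $\KK$-group schemes. The associated long exact sequence in fppf cohomology reads
\[
1\longrightarrow \KK^* \longrightarrow A^* \longrightarrow (\mathcal{A}^*/\Gm)(\KK) \longrightarrow H^1_{fl}(\Spec\KK,\Gm),
\]
and the last group vanishes by Hilbert~90. Therefore $(\mathcal{A}^*/\Gm)(\KK)\cong A^*/\KK^*$, and combining this with the isomorphism established above yields $\Aut(X)\cong A^*/\KK^*$.

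The main point requiring care is the identification in the first step: one must be sure that the functorial description of $X$ via right ideals of rank $n$ really matches the projective space $\PP(V)$ in such a way that inner automorphisms of $\End(V)$ act via the standard $\PGL(V)$-action. Once this compatibility is unwound, both assertions follow formally from descent and Hilbert~90.
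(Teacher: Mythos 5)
Your proposal is correct and follows essentially the same route as the paper: reduce the first assertion to the standard isomorphism $\PGL_n\cong\underline{\Aut}(\PP^{n-1})$ by base change to $\KK^{sep}$, then deduce $\Aut(X)\cong A^*/\KK^*$ from the exact cohomology sequence of $1\to\Gm\to\mathcal{A}^*\to\mathcal{A}^*/\Gm\to 1$ together with Hilbert~90. The only cosmetic difference is that the paper phrases the second step via Galois cohomology of the sequence of $\KK^{sep}$-points rather than fppf cohomology of the sequence of group schemes; since $\Gm$ is smooth these agree, and your added unwinding of rank-$n$ right ideals as $L\otimes V^*$ is a welcome elaboration of a point the paper leaves implicit.
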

\begin{proof}
For the first assertion, it suffices to prove that~\eqref{grschofautsb} is an isomorphism after the base change to $\KK^{sep}$. But $A\otimes _{\KK} \KK^{sep}$ is the matrix algebra and
$X_{\KK^{sep}}\cong\PP_{\KK^{sep}}^{n-1}$. Thus, the base change~\eqref{grschofautsb} boils down to the natural homomorphism $\PGL_n \to \underline{\Aut}(\PP_{\KK^{sep}}^{n-1})$
which is known to be an isomorphism.

For the second assertion,
consider  the exact sequence of groups with $\Gal(\KK^{sep}/\KK)$-action
$$
1\to (\KK^{sep})^* \to \mathcal{A}^*(\KK^{sep}) \to \underline{\Aut}(X)(\KK^{sep}) \to 1
$$
and the corresponding  exact sequence of Galois cohomology
groups
$$
1\to \KK^*\to A^*\to \Aut(X)  \to H^1(\Gal(\KK^{sep}/\KK), (\KK^{sep})^*).
$$
The latter cohomology group vanishes by Hilbert's Theorem~90,
and the assertion of the lemma follows.
\end{proof}

Recall the reduced norm homomorphism:
$$
\Norm \colon A^* \to \KK^*.
$$
One has $\Norm(cx) = c^n \Norm(x)$, for every
$c\in \KK^*$ and $x\in  A^*$. Hence, $\Norm$ induces a homomorphism
\begin{equation}\label{norm}
 A^*/\KK^* \to  \KK^*/(\KK^*)^n,
\end{equation}
where $(\KK^*)^n \subset  \KK^*$ is the subgroup of $n$-th powers.

\begin{lemma}\label{lemma:SB-exponent}
Let $n$ be a positive integer, let $\KK$ be a field  that contains all
roots of $1$, and let~$A$ be a central
division algebra of dimension $n^2$ over $\KK$.
Then, for every finite subgroup~\mbox{$\Gamma \subset A^*/\KK^*$}, the restriction of the homomorphism~\eqref{norm} to $\Gamma$ is injective:
$$
\Gamma \hookrightarrow \KK^*/(\KK^*)^n.
$$
In particular, $\Gamma$ is abelian and, for every element~\mbox{$g \in A^*/\KK^*$} of finite order,
the order of~$g$ divides $n$.
\end{lemma}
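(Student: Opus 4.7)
The plan is to prove directly that the homomorphism $\Gamma \to \KK^*/(\KK^*)^n$ induced by~\eqref{norm} is injective. Once this is established, both remaining assertions follow immediately: the target group is abelian of exponent dividing~$n$, so the same holds for $\Gamma$, and in particular every element of finite order in $A^*/\KK^*$ has order dividing~$n$.

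To prove injectivity, suppose $\bar x \in \Gamma$ has finite order $d$ and maps to the identity in $\KK^*/(\KK^*)^n$. Pick a lift $x \in A^*$ and write $\Norm(x) = c^n$ for some $c \in \KK^*$. Since the reduced norm restricted to $\KK^*$ is the $n$-th power map, replacing $x$ by $xc^{-1}$---which represents the same class in $A^*/\KK^*$---we may assume $\Norm(x) = 1$. Because $\bar x^d = 1$, we have $x^d = a \in \KK^*$, and applying the reduced norm yields $a^n = \Norm(x^d) = \Norm(x)^d = 1$. Therefore $a$ is a root of unity in~$\KK$ and $x^{dn} = 1$, so $x$ itself has finite order in~$A^*$.

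Now consider $L := \KK[x] \subset A$, which is a commutative finite-dimensional $\KK$-subalgebra of the division algebra~$A$ and hence a field. Then $x$ is a root of unity in $L$; let $M$ be its order. The main subtlety---and the only obstacle to a one-line conclusion---is the characteristic-$p$ case when $p \mid n$, where $A$ may contain purely inseparable extensions of $\KK$. This is handled by the observation that in any field of characteristic $p$ the identity $T^{p^k} - 1 = (T - 1)^{p^k}$ forces every root of unity to have order coprime to~$p$, so $M$ is coprime to $\Char\KK$. In particular $T^M - 1$ is separable, and since $\KK$ contains all roots of unity it splits completely over~$\KK$; hence $\mu_M(L) = \mu_M(\KK)$, so $x \in \mu_M(L) \subset \KK^*$. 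This gives $\bar x = 1$ in $A^*/\KK^*$, as required.
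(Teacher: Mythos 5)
Your proof is correct and follows essentially the same route as the paper: reduce via the reduced norm to showing that any finite-order element of $A^*$ already lies in $\KK^*$, using that $\KK$ contains all roots of unity and that $A$ has no zero divisors. The only (cosmetic) difference is that you pass to the subfield $\KK[x]$ and count roots of $T^M-1$ there, whereas the paper factors $x^d-1$ directly in $A$; your version is in fact slightly more careful about the case $\Char\KK \mid d$.
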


\begin{proof}
Denote by $A^*_1$ the kernel of the reduced norm homomorphism $\Norm \colon A^* \to \KK^*$.  We have an exact sequence of groups
$$
1\to \mu_n(\KK) \to A^*_1 \to A^*/\KK^* \to \KK^*/(\KK^*)^n,
$$
where $\mu_n(\KK) \subset \KK^*$ is the subgroup of $n$-th roots of unity.
In particular,
every element~\mbox{$g \in A^*/\KK^*$} of finite order whose image in $\KK^*/(\KK^*)^n$ is $1$
lifts to an element~\mbox{$\tilde g  \in A^*_1 \subset A^*$}, which also has  finite order.
Thus it suffices to prove that every element~\mbox{$x \in A^*$} of finite order belongs to $\KK^* \subset A^*$.
Assume that $x^d=1$. Using that~$\KK$ contains all
roots of~$1$, we get
$$
0= x^d -1= \prod_{\epsilon \in \mu_d(\KK)}(x -\epsilon).
$$
Since $A$ has no zero divisors, we conclude that $x\in \mu_d(\KK)$ as desired.
\end{proof}

We need an auxiliary result about bilinear forms on finite abelian groups.

\begin{lemma}\label{lemma:Gamma-bilinear-form}
Let $\Gamma$ be a finite abelian group, and let $B\colon \Gamma\otimes_{\ZZ}\Gamma\to \QQ/\ZZ$
be a homomorphism such that $B(g,g)=0$ for every $g\in\Gamma$. Then there exists a subgroup
$\Lambda\subset\Gamma$ such that the restriction of $B$ to $\Lambda\otimes_{\ZZ}\Lambda$ is zero, and $|\Gamma|$ divides $|\Lambda|^2$.
\end{lemma}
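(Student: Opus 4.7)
The plan is to prove a stronger statement in the non-degenerate case, namely that there exists an isotropic $\Lambda$ with $|\Lambda|^2 = |\Gamma|$, via a symplectic-basis argument, and then to reduce the general case to it. Two preliminary observations will be used throughout: bilinearity combined with $B(g,g)=0$ forces skew-symmetry $B(g,h) = -B(h,g)$, and the value $B(g,h) \in \QQ/\ZZ$ has order dividing both the order of $g$ and the order of $h$.

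First I will perform two reductions. Decomposing $\Gamma = \bigoplus_p \Gamma_p$ into primary components, the second observation forces $B(\Gamma_p, \Gamma_q)=0$ for $p \ne q$, so the $\Gamma_p$ are pairwise $B$-orthogonal; it thus suffices to find $\Lambda_p \subset \Gamma_p$ with $|\Gamma_p| \mid |\Lambda_p|^2$ for each $p$ and take $\Lambda = \bigoplus_p \Lambda_p$. Next, letting $R = \{g \in \Gamma : B(g,-) = 0\}$ be the radical, $B$ descends to a non-degenerate alternating form $\bar B$ on $\bar\Gamma = \Gamma/R$; any $\bar B$-isotropic $\bar\Lambda \subset \bar\Gamma$ with $|\bar\Gamma| \mid |\bar\Lambda|^2$ lifts to a $B$-isotropic preimage $\Lambda \supset R$ with $|\Lambda|^2 = |R|^2 |\bar\Lambda|^2$, which is divisible by $|\Gamma| = |R| \cdot |\bar\Gamma|$.

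It remains to produce, for any finite abelian $p$-group $\Gamma$ with non-degenerate alternating $B$, an isotropic $\Lambda$ with $|\Lambda|^2 = |\Gamma|$; I will induct on $|\Gamma|$. Choose $g \in \Gamma$ of maximal order $p^k$. Non-degeneracy identifies $\Gamma$ with $\Hom(\Gamma, \QQ/\ZZ)$, and divisibility of $\QQ/\ZZ$ lets one extend the functional $g \mapsto 1/p^k$ from $\langle g\rangle$ to all of $\Gamma$; hence there exists $h \in \Gamma$ with $B(g,h) = 1/p^k$. Maximality of $k$ forces the order of $h$ to be exactly $p^k$, and pairing a putative relation $ag + bh = 0$ against $g$ and against $h$ shows that $\langle g\rangle \cap \langle h\rangle = 0$. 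Thus $H = \langle g\rangle \oplus \langle h\rangle \cong (\ZZ/p^k\ZZ)^2$ with $B|_H$ non-degenerate. A short argument then gives the orthogonal splitting $\Gamma = H \oplus H^\perp$ with $B|_{H^\perp}$ non-degenerate (the map $\Gamma \to \Hom(H, \QQ/\ZZ)$, $x \mapsto B(x, -)|_H$, has kernel $H^\perp$ and is surjective by non-degeneracy of $B|_H$, while $H \cap H^\perp = 0$ for the same reason). The induction hypothesis applied to $H^\perp$ yields a Lagrangian $\Lambda' \subset H^\perp$ with $|\Lambda'|^2 = |H^\perp|$, and $\Lambda = \langle g\rangle \oplus \Lambda'$ is $B$-isotropic and satisfies $|\Lambda|^2 = p^{2k}|H^\perp| = |H| \cdot |H^\perp| = |\Gamma|$.

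The main obstacle is the inductive step, specifically producing an $h$ whose order is large enough to yield a genuine hyperbolic plane and to ensure that $B|_H$ is non-degenerate; both points hinge on choosing $g$ of \emph{maximal} order, so that the order of $h$ cannot exceed $p^k$ and is in fact forced to equal $p^k$ by the identity $B(g,h) = 1/p^k$.
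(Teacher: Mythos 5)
Your proof is correct. It shares with the paper's argument the key inductive device --- pass to a $p$-primary component, choose $g$ of maximal order $p^k$, and induct on $|\Gamma|$ --- but the execution is genuinely different. The paper works directly with the possibly degenerate form: it sets $\Gamma'=\{g'\in\Gamma : B(g,g')=0\}$, bounds $[\Gamma:\Gamma']\le p^k$ via the embedding $\Gamma/\Gamma'\hookrightarrow\Hom(\langle g\rangle,\QQ/\ZZ)$, splits off $\langle g\rangle$ as a direct factor of $\Gamma'$ using maximality of the order of $g$, and applies the induction hypothesis to the complementary factor with no non-degeneracy hypothesis; the resulting chain of inequalities is converted into the divisibility statement only at the very end, by observing that all the quantities involved are powers of $p$. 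You instead first quotient by the radical to reduce to a non-degenerate alternating form, then produce a hyperbolic partner $h$ for $g$ (using self-duality of finite abelian groups and divisibility of $\QQ/\ZZ$) and split off the plane $H=\langle g\rangle\oplus\langle h\rangle$ orthogonally. This costs you the extra verifications of the radical reduction, the existence and exact order of $h$, and the decomposition $\Gamma=H\oplus H^{\perp}$ --- all of which you carry out correctly --- but it buys a sharper conclusion: in the non-degenerate case you obtain a genuine Lagrangian with $|\Lambda|^2=|\Gamma|$ exactly, in effect reproving the classification of non-degenerate alternating forms on finite abelian groups as orthogonal sums of hyperbolic planes, whereas the paper's softer induction only yields the divisibility $|\Gamma|\mid|\Lambda|^2$. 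For the application to Corollary~\ref{corollary:Gamma-bilinear-form} the weaker statement is all that is needed, which is why the paper's shorter route suffices there.
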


\begin{proof}
It is enough to prove the assertion in the case when $\Gamma$ is an $\ell$-group for some prime number $\ell$. We will do this
by induction on the order of~$\Gamma$.

Choose an element $g$ of maximal possible order $\ell^r$ in $\Gamma$, and
let $\langle g\rangle\cong \ZZ/\ell^r\ZZ$ be the cyclic group generated by~$g$.
Set
$$
\Gamma'=\{g'\in\Gamma\mid B(g,g')=0\}.
$$
Then $\Gamma'$ is a subgroup of $\Gamma$, and
there is an injective homomorphism
$$
\Gamma/\Gamma'  \hookrightarrow \Hom(\langle g\rangle,\QQ/\ZZ)\cong \ZZ/\ell^r\ZZ.
$$
Thus
$$
|\Gamma|\le |\langle g\rangle|\cdot |\Gamma'|=\ell^r|\Gamma'|.
$$

Note that $\Gamma'$ contains the cyclic group
$\langle g\rangle$.
Since $g$ has maximal possible order in~$\Gamma$, we have
$\Gamma'\cong \langle g\rangle \times\Gamma''$ for some subgroup $\Gamma''\subset\Gamma'$.
By induction, the group $\Gamma''$ contains a subgroup $\Lambda''$
such that the restriction of $B$ to $\Lambda''\otimes_{\ZZ}\Lambda''$ is zero, and $|\Gamma''|\le |\Lambda''|^2$.
Set~\mbox{$\Lambda=\langle g\rangle\times \Lambda''$}. Then  the restriction of $B$ to $\Lambda\otimes_{\ZZ}\Lambda$ is zero,
and
$$
|\Lambda|^2=\ell^{2r}|\Lambda''|^2\ge
\ell^{2r}|\Gamma''|=\ell^r|\Gamma'|\ge |\Gamma|.
$$
Since all these numbers are powers of $\ell$, the assertion of the lemma follows.
\end{proof}

\begin{corollary}\label{corollary:Gamma-bilinear-form}
Let $n$ be a positive integer, let $\KK$ be a field of characteristic $p\ge 0$ that contains all
roots of $1$, and let~$A$ be a central
division algebra of dimension $n^2$ over $\KK$.
Let~\mbox{$\Gamma \subset A^*/\KK^*$} be a finite subgroup.
Then $\Gamma\cong\Gamma_1\times\Gamma_2$, where
$\Gamma_2$ is a $p$-group, while the order of $\Gamma_1$
is not divisible by~$p$ and divides~$n^2$.
\end{corollary}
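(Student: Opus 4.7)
The plan is to combine Lemma \ref{lemma:SB-exponent} with Lemma \ref{lemma:Gamma-bilinear-form} through a commutator pairing, and then invoke Kummer theory. By Lemma \ref{lemma:SB-exponent}, $\Gamma$ is abelian, hence decomposes canonically as $\Gamma = \Gamma_1\times\Gamma_2$ with $\Gamma_2$ the $p$-Sylow subgroup (trivial if $p=0$) and $\Gamma_1$ the complement of order coprime to $p$; it therefore suffices to bound $|\Gamma_1|$ and show that it divides $n^2$.

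First, I would define a pairing $B\colon \Gamma_1\times\Gamma_1\to \mu_n(\KK)$ by $B(g,h) = \tilde g\tilde h\tilde g^{-1}\tilde h^{-1}$, where $\tilde g,\tilde h\in A^*$ are arbitrary lifts. Commutativity of $\Gamma$ places this commutator in $\KK^*$; centrality of $\KK^*$ makes it independent of the choice of lifts; and since Lemma \ref{lemma:SB-exponent} gives $\tilde g^n \in \KK^*$, one has $B(g,h)^n = [\tilde g^n,\tilde h] = 1$, so the values lie in $\mu_n(\KK)$. Identifying $\mu_n(\KK)$ with $\ZZ/n\ZZ \subset \QQ/\ZZ$ via a fixed primitive root and noting $B(g,g) = 1$, Lemma \ref{lemma:Gamma-bilinear-form} then supplies a subgroup $\Lambda\subset\Gamma_1$ on which $B$ vanishes identically and with $|\Gamma_1| \mid |\Lambda|^2$.

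Next I would observe that the vanishing of $B$ on $\Lambda$ means that the chosen lifts of elements of $\Lambda$ commute pairwise in $A^*$, so the $\KK$-subalgebra $L\subset A$ they generate is a commutative subring of the division algebra $A$, hence a subfield. The standard centralizer argument for central simple algebras (the centralizer of $L$ in $A$ is central simple over $L$, of $\KK$-dimension $n^2/[L:\KK]$) then forces $[L:\KK] \mid n$.

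The main obstacle is to show $|\Lambda| \mid [L:\KK]$, which is where the hypothesis that $\KK$ contains all roots of unity enters in an essential way. Let $e$ be the exponent of $\Lambda$, which is coprime to $p$ and divides $n$. The assignment $g\mapsto \tilde g^e \bmod (\KK^*)^e$ gives a map $\Lambda\to \KK^*/(\KK^*)^e$; its injectivity follows from the same argument as in the proof of Lemma \ref{lemma:SB-exponent}, since any $e$-th root of unity in the division algebra $A$ must already lie in $\mu_e(\KK)$. Letting $N\subset \KK^*$ be the subgroup generated by $(\KK^*)^e$ and by the $\tilde g^e$, one obtains $N/(\KK^*)^e \cong \Lambda$; then standard Kummer theory (applicable since $e$ is coprime to $\Char \KK$ and $\mu_e\subset \KK$) produces a Galois extension $\KK(N^{1/e})/\KK$ of degree exactly $|\Lambda|$. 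Every $e$-th root of an element of $N$ can be written as a product of commuting lifts $\tilde g\in L$ with a root of unity in $\KK\subset L$, so $\KK(N^{1/e})\subset L$. This yields $|\Lambda| \mid [L:\KK] \mid n$, and hence $|\Gamma_1| \mid |\Lambda|^2 \mid n^2$, completing the plan.
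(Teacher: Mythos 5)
Your proof is correct and follows essentially the same route as the paper: the commutator pairing on $\Gamma_1$, Lemma~\ref{lemma:Gamma-bilinear-form} to extract $\Lambda$ with $|\Gamma_1|$ dividing $|\Lambda|^2$, the commutative subfield $L$ generated by the (commuting) lifts with $[L:\KK]$ dividing $n$, and a Kummer-theoretic injection showing $|\Lambda|$ divides $[L:\KK]$. The only difference is cosmetic and sits in the last step: the paper checks that $L$ itself is Galois over $\KK$ and then cites Example~\ref{example:L-over-K-torus}, whereas you construct the Kummer extension $\KK(N^{1/e})$ of degree exactly $|\Lambda|$ inside $L$ directly --- the same mechanism, slightly repackaged.
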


\begin{proof}
According to Lemma~\ref{lemma:SB-exponent}, the group $\Gamma$ is abelian.
Thus, if $p>0$, then~\mbox{$\Gamma\cong\Gamma_1\times\Gamma_2$} for its $p$-Sylow subgroup $\Gamma_2$ and a subgroup
$\Gamma_1$ of order not divisible by~$p$. If $p=0$, we set~\mbox{$\Gamma_1=\Gamma$} and let $\Gamma_2$ be a trivial group.
Denote by~$\mu_\infty(\KK)$ the subgroup of roots of~$1$ in~$\KK$.

Let the (infinite) group $\tilde{\Gamma}\subset A^*$ be the preimage of $\Gamma$
under the projection~\mbox{$A^*\to A^*/\KK^*$}.
Consider the commutator pairing
$$
B\colon\Gamma\otimes_\ZZ\Gamma\to\mu_\infty(\KK), \quad (g,h)\mapsto \tilde{g}\tilde{h}\tilde{g}^{-1}\tilde{h}^{-1},
$$
where $\tilde{g}$ and $\tilde{h}$ are arbitrary preimages of $g$ and $h$ in $\tilde{\Gamma}$.
Since $\tilde{\Gamma}$ is a central extension of the abelian group $\Gamma$, the map $B$ is a well-defined homomorphism.
Choosing an embedding~\mbox{$\mu_\infty(\KK)\hookrightarrow\QQ/\ZZ$}, we may consider $B$ as a $\ZZ$-bilinear pairing
with values in~\mbox{$\QQ/\ZZ$}.

Apply Lemma~\ref{lemma:Gamma-bilinear-form} to the restriction of the pairing $B$ to $\Gamma_1\otimes_\ZZ\Gamma_1$.
We infer that there exists a subgroup $\Lambda\subset\Gamma_1$ such that $B$ is trivial on $\Lambda\otimes_\ZZ\Lambda$
and $|\Gamma_1|$ divides $|\Lambda|^2$.
In particular, the preimage $\tilde{\Lambda}$ of $\Lambda$ in $\tilde{\Gamma}$ is abelian.
Let $\LL$ be the subalgebra of $A$ generated by $\tilde{\Lambda}$. Then $\LL$ is commutative, and thus it is a field.
Furthermore, $\LL$
is contained in the composite of field extensions such that each of them is obtained by adjoining to $\KK$
some element $a\in A$ with $a^k\in\KK$ for some positive integer $k$ not divisible by $p$.
Since $\KK$ contains all roots of $1$, an extension of this form is a splitting field of the polynomial~\mbox{$x^k-a$};
thus, it is a Galois extension of $\KK$. Therefore, $\LL$ is a Galois extension of $\KK$ as well.
On the other hand, the number~\mbox{$|\Gal(\LL/\KK)|=[\LL:\KK]$} divides~$n$,
see for instance~\mbox{\cite[\S10.3]{Bourbaki}}.

Finally, recall from Example~\ref{example:L-over-K-torus} that $|\Lambda|$ divides~\mbox{$|\Gal(\LL/\KK)|$}. Therefore,~$|\Gamma_1|$ divides~$n^2$.
\end{proof}

The following is a more precise version of Theorem~\ref{theorem:SB}(ii).

\begin{proposition}
\label{proposition:SBp}
Let $\KK$ be a field of characteristic $p\ge 0$
that contains all roots of~$1$. Let~$A$ be a central division algebra over $\KK$ of dimension~$n^2$,
and let $X$ be the corresponding Severi--Brauer variety.
Write~\mbox{$n=n' p^m$} for some non-negative integers~$m$ and~$n'$ such that~$n'$ is not divisible by~$p$.
Then every finite subgroup~\mbox{$\Gamma\subset\Aut(X)$}
is a direct product~\mbox{$\Gamma=\Gamma_1\times \Gamma_2$} of an abelian group $\Gamma_1$ whose exponent divides $n'$ and whose order divides~${n'}^2$,
and an abelian $p$-group~$\Gamma_2$ whose exponent divides~$p^m$.
\end{proposition}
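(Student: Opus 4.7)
The plan is to deduce Proposition~\ref{proposition:SBp} by combining three already established facts: the identification $\Aut(X)\cong A^*/\KK^*$ from Lemma~\ref{lemma:SB-Aut}, the exponent bound and abelianness from Lemma~\ref{lemma:SB-exponent}, and the order bound from Corollary~\ref{corollary:Gamma-bilinear-form}. The bulk of the work has already been done; what remains is a clean repackaging of the existing information according to the $p$-primary decomposition.

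First I would invoke Lemma~\ref{lemma:SB-Aut} to identify $\Gamma$ with a finite subgroup of $A^*/\KK^*$, and then apply Lemma~\ref{lemma:SB-exponent} to conclude that $\Gamma$ is abelian and that the exponent of $\Gamma$ divides $n=n'p^m$. Since $\Gamma$ is a finite abelian group, it splits canonically as $\Gamma=\Gamma_1\times\Gamma_2$, where $\Gamma_2$ is the $p$-Sylow subgroup (set to be trivial if $p=0$) and $\Gamma_1$ is the complementary subgroup of order prime to $p$.

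Next I would read off the required properties of each factor. For $\Gamma_2$, its exponent is a power of $p$ that divides $n'p^m$; since $\gcd(n',p)=1$ this power of $p$ must divide $p^m$. For $\Gamma_1$, its exponent is coprime to $p$ and divides $n=n'p^m$, so it divides $n'$. To control the order of $\Gamma_1$, I would apply Corollary~\ref{corollary:Gamma-bilinear-form}, which asserts that $|\Gamma_1|$ divides $n^2=(n')^2 p^{2m}$; since $|\Gamma_1|$ is coprime to $p$, it in fact divides $(n')^2$.

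The argument is essentially a bookkeeping exercise, so I do not foresee a genuine obstacle. The only mild subtlety is that Corollary~\ref{corollary:Gamma-bilinear-form} is stated with an auxiliary $\Gamma_2$ that a priori could differ from the one we introduce here, but since both are defined as the $p$-Sylow complement in an abelian group, the two decompositions agree, and the cited bound on the prime-to-$p$ part applies directly.
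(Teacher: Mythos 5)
Your proof is correct and follows essentially the same route as the paper's: identify $\Gamma$ with a subgroup of $A^*/\KK^*$ via Lemma~\ref{lemma:SB-Aut}, take the product structure and order bound from Corollary~\ref{corollary:Gamma-bilinear-form}, and the abelianness and exponent bounds from Lemma~\ref{lemma:SB-exponent}. The only difference is that you spell out the coprimality bookkeeping (extracting the divisibility by $(n')^2$ and $p^m$ from the bounds by $n^2$ and $n$) that the paper leaves implicit, and you correctly note that the two Sylow decompositions coincide.
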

\begin{proof}
By Lemma~\ref{lemma:SB-Aut},
one has $\Gamma\subset A^*/\KK^*$. Therefore, the assertion about the product structure and about
the order of~$\Gamma_1$ follows
from Corollary~\ref{corollary:Gamma-bilinear-form}.
The assertion about the exponents of~$\Gamma_1$ and~$\Gamma_2$ follows from Lemma~\ref{lemma:SB-exponent}.
\end{proof}

Now we are ready to prove Theorem~\ref{theorem:SB}.

\begin{proof}[Proof of Theorem~\ref{theorem:SB}]
Assertion~(ii) is a particular case of Proposition~\ref{proposition:SBp}.
Also, Proposition~\ref{proposition:SBp} tells us that if $A$ is a central division algebra,
then the group $\Aut(X)$ has bounded finite subgroups.

Now suppose that $A$ is not a division algebra. Then
$$
A\cong D\otimes_{\KK}\mathrm{Mat}_m(\KK)
$$
for some $2\le m\le n$, where $\mathrm{Mat}_m(\KK)$ denotes the algebra of $m\times m$-matrices,
and $D$ is a central division algebra over~$\KK$, see for instance~\mbox{\cite[Theorem~2.1.3]{GilleSzamuely}}.
Thus~$A$ contains a subalgebra isomorphic to $\mathrm{Mat}_m(\KK)$. Since the field~$\KK$ contains
roots of $1$ of arbitrarily large degree, we see from Lemma~\ref{lemma:SB-Aut} that the group $\Aut(X)$ contains elements of
arbitrarily large finite order. This completes the proof of assertion~(i).
\end{proof}

\begin{remark}\label{remark:SB-via-LAG}
Let $A$ be a central simple algebra  over a field $\KK$.  Denote by $ \mathcal{A}^*$ the algebraic group whose  $S$-points are invertible elements in the algebra $A \otimes _\KK  \mathcal{O}_S$. We have a natural embedding
$\Gm \hookrightarrow \mathcal{A}^*$ induced by the homomorphism $\mathcal{O}_S^* \hookrightarrow (A \otimes _\KK  \mathcal{O}_S)^*$.  The quotient group scheme
$\mathcal{A}^*/\Gm$ is anisotropic if and only if $A$ is a division algebra (see, for instance,~\mbox{\cite[\S23.1]{Borel}}).
In particular,  if $\KK$ is perfect,  then Theorem~\ref{theorem:SB}(i) follows from Theorem~\ref{theorem:main} applied to the reductive group $\mathcal{A}^*/\Gm$.

Similarly,
under the assumptions of  Proposition~\ref{proposition:SBp},  the fact that every finite subgroup of  $A^*/\KK^*$ is a
\emph{semi}-direct
product of its abelian $p$-Sylow subgroup  and a normal subgroup of bounded order is a special case of
Theorem~\ref{theorem:LAG}(iii) applied to the reductive group~\mbox{$\mathcal{A}^*/\Gm$}.
Indeed, the algebraic fundamental group of  $\mathcal{A}^*/\Gm$ is isomorphic to the group scheme $\mu_n=\ker(\Gm\rar{n} \Gm)$,
which has order $n$. Also, since  $\mathcal{A}^*/\Gm$
has type~$\mathrm{A}_{n-1}$, the set of torsion primes for  $\mathcal{A}^*/\Gm$ is empty.
\end{remark}

\begin{remark}\label{remark:char-vs-dim-perfect}
Let $\KK$ be a perfect field of positive characteristic $p$, and let $A$ be a central division algebra of dimension~$n^2$ over $\KK$.
Then $p$ does not divide $n$. Indeed, the Frobenius morphism
$\mathrm{Fr}\colon \bar{\KK}^* \to \bar{\KK}^*$ is an isomorphism and, hence, the Brauer group
$$
\Br(\KK)\cong H^2(\Gal(\bar{\KK}/\KK), \bar{\KK}^*)
$$
of $\KK$ has no $p$-torsion elements and it is $p$-divisible. Therefore, our claim  follows from the fact that, over any field, the dimension of a central division algebra and the order of its class in the Brauer group have the same prime factors
(see for instance~\mbox{\cite[Lemma~2.1.1.3]{Lieblich}}).
\end{remark}

The restriction on the characteristic of $\KK$ in  Theorem~\ref{theorem:SB} is essential for validity of the statement.

\begin{example}
\label{example:BMR}
Let $F$ be a field of characteristic $p>0$, and
let~\mbox{$\KK=F(x, y)$} be the field of rational
functions in two variables, so that $\KK$ is a non-perfect field of characteristic~$p$.
Let $A$ be an algebra over~$\KK$
with generators $u$ and $v$
and relations
$$
v^p =x,\quad u^p=y,\quad vu-uv=1.
$$
Then $A$ is a central division algebra of dimension~$p^2$ over $\KK$. This is a special case of the Azumaya property of the ring of differential operators in characteristic~$p$ (see~\mbox{\cite[Theorem~2.2.3]{BMR08}}), but can be also checked directly.
Indeed, it suffices to check that $A_{\bar{\KK}}=A \otimes_{\KK} \bar{\KK}$
is the algebra of $p\times p$-matrices.
To see this, take the elements
$$
v'=v-x^{\frac{1}{p}},\quad   u'=u-y^{\frac{1}{p}}
$$
in  $A_{\bar{\KK}}$
with  $x^{\frac{1}{p}},  y^{\frac{1}{p}} \in \bar{\KK}$.
Then $v^{\prime p}= u^{\prime p}=0$ and $v'u'-u'v'=1$.
Define an action of~ $A_{\bar{\KK}}$ on
the $\bar{\KK}$-vector space $V=\bar{\KK}[z]/(z^{p})$ letting $u'$ act as the multiplication by~$z$
and~$v'$ as~$\frac{d}{dz}$. It is easy to verify that $V$ is an irreducible representation of
$A_{\bar{\KK}}$.  Hence, by the Jacobson density theorem
the homomorphism $A_{\bar{\KK}} \to \End_{\bar{\KK}}(V)$ is surjective.
Since the dimension
of~$A_{\bar{\KK}}$ is at most $p^2$ for obvious reasons,
the latter homomorphism
is actually an isomorphism.

Now, since the intersection of~\mbox{$F(v)^*$} with~$\KK^*$ is $F(x)^*=F(v^p)^*$,
we see that the group~\mbox{$A^*/\KK^*$}  contains~\mbox{$F(v)^*/F(v^p)^*$} as a subgroup. The latter
group is $p$-torsion, because the $p$-th power of any rational function in~$v$ is a rational function in~$v^p$.
In other words, it can be regarded as a
vector space over the field~$\mathbf{F}_p$ of~$p$ elements.
At the same time, $F(v)$ is a vector space of dimension~$p$ over $F(v^p)$,
so that~\mbox{$F(v)^*/F(v^p)^*$} can be thought of as a projective space of dimension~\mbox{$p-1>0$} over an infinite field~$F(v^p)$.
Thus, the set~\mbox{$F(v)^*/F(v^p)^*$} is infinite, which implies that~\mbox{$F(v)^*/F(v^p)^*$} is infinite-dimensional as
a vector space over~$\mathbf{F}_p$.
In particular, for~\mbox{$p=2$} this construction provides an example of a conic $C$ over a non-perfect field of characteristic~$2$
such that $C$ is acted on by elementary $2$-groups of arbitrarily large order.
\end{example}

\medskip
However, for central division algebras whose dimension is divisible by the characteristic of $\KK$
one can prove the following result.

\begin{proposition}\label{boundnessfornonperfect}
Let $A$ be  a central division algebra of dimension $n^2$ over a field $\KK$ of finite characteristic  $p$.
If there exists an element $v\in A$ that is inseparable over $\KK$
(i.e., the field extension~\mbox{$\KK(v)\supset \KK$} is inseparable), then
the group $A^*/\KK^*$ has unbounded finite subgroups.
On the other hand, if $\KK$ contains all roots of $1$, and
every element $v\in A$ is separable over $\KK$, then the group $A^*/\KK^*$ has bounded finite subgroups.
\end{proposition}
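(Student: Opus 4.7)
The proof splits into two halves. For the first (an inseparable element implies unboundedness), the plan is to extract from $v$ a ``purely inseparable element'' $w \in A$ of exponent~$p$, i.e.~$w \notin \KK$ but $w^p \in \KK$. Once such $w$ is in hand, $E := \KK(w) \subset A$ is a purely inseparable degree-$p$ extension of $\KK$, every $x \in E^*$ satisfies $x^p \in \KK^*$, and hence $E^*/\KK^*$ is an $\mathbf{F}_p$-vector space. It is infinite: the imperfect field $\KK$ is automatically infinite (finite fields being perfect), and for distinct $c_1, c_2 \in \KK$ the elements $w + c_1$ and $w + c_2$ are not $\KK^*$-proportional, since any relation $w + c_1 = \lambda(w + c_2)$ with $\lambda \in \KK^*$ forces $\lambda = 1$ and $c_1 = c_2$ by comparing the coefficients in the basis $\{1, w\}$ of $E$ over~$\KK$. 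Any infinite $\mathbf{F}_p$-vector space contains $\mathbf{F}_p$-subspaces of every finite dimension, producing finite subgroups of $E^*/\KK^* \subset A^*/\KK^*$ of arbitrarily large $p$-power order.

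Producing $w$ from $v$ is the main obstacle. Let $e \ge 1$ be minimal such that $v^{p^e}$ is separable over~$\KK$; then the minimal polynomial of $v$ has the shape $g(x^{p^e})$ with $g \in \KK[y]$ irreducible and separable. Put $u := v^{p^{e-1}}$, so that $u^p = v^{p^e}$ is separable over $\KK$ while $u \notin \KK$ (otherwise $v$ would satisfy $x^{p^{e-1}} - u = 0$, contradicting that the degree of the minimal polynomial of $v$ is at least $p^e$). If $g$ has degree one, equivalently $v^{p^e} \in \KK$, then $w := u$ works immediately. In the residual case $s := u^p$ lies in a proper separable extension $F = \KK(s) \subsetneq A$, and one must still exhibit an element of $A$ that is purely inseparable over~$\KK$ itself. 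I would tackle this by analysing the centralizer $C_A(F)$, which is a central simple $F$-algebra containing the purely-inseparable-over-$F$ element $u$ of exponent~$p$, and either (a) applying a Galois-averaging construction relative to the separable extension $F/\KK$ so that a suitable product of conjugates of $u$ has $p$-th power in~$\KK$, or (b) invoking the structural theory of $p$-primary central simple algebras to descend the purely inseparable subextension from $F$ to $\KK$. This is the delicate step.

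For the second half, assume every element of $A$ is separable over~$\KK$. By Corollary~\ref{corollary:Gamma-bilinear-form}, any finite subgroup $\Gamma \subset A^*/\KK^*$ decomposes as $\Gamma_1 \times \Gamma_2$ with $\Gamma_2$ a $p$-group and $|\Gamma_1|$ dividing $n^2$, so it suffices to show $A^*/\KK^*$ contains no element of order~$p$. Suppose $g \in A^*/\KK^*$ has order~$p$ and lift to $\tilde g \in A^*$ with $\tilde g^p = c \in \KK^*$. If $c = d^p$ for some $d \in \KK^*$, then $\tilde g$ commutes with the central element~$d$, and the Frobenius identity in characteristic~$p$ yields $(\tilde g - d)^p = \tilde g^p - d^p = 0$; since $A$ has no zero divisors this forces $\tilde g = d \in \KK^*$, contradicting $g \ne 1$. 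If $c \notin \KK^p$, the minimal polynomial of $\tilde g$ divides $x^p - c = (x - c^{1/p})^p$ in $\bar\KK[x]$ and cannot equal the linear factor $x - c^{1/p}$ (as $c^{1/p} \notin \KK$), so it is inseparable, contradicting the hypothesis. Hence $\Gamma_2$ is trivial and $|\Gamma|$ divides~$n^2$, giving the uniform bound.
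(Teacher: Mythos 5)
Your second half is correct and is essentially the paper's argument in different packaging. The paper uses Lemma~\ref{lemma:SB-exponent} to get $v^n\in\KK^*$ for any torsion class, deduces $v^{n'}\in\KK^*$ (with $n'$ the prime-to-$p$ part of $n$) because otherwise $v^{n'}$ would be purely inseparable over $\KK$, and then invokes Proposition~\ref{proposition:SBp}; you instead rule out $p$-torsion in $A^*/\KK^*$ directly and invoke Corollary~\ref{corollary:Gamma-bilinear-form}. Both versions turn on the same observation --- an element whose $p$-power lies in $\KK$ is purely inseparable unless it is central --- and yours is complete (the paper's bookkeeping gives the slightly sharper bound $n'^2$ in place of your $n^2$).

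The first half, however, has a genuine gap, and you flag it yourself: you produce the required $w\notin\KK$ with $w^p\in\KK$ only when $v^{p^e}\in\KK$, and in the residual case neither of your two strategies is carried out --- strategy (a) in particular founders because the conjugates $g_\sigma u g_\sigma^{-1}$ do not commute and there is no reason the $p$-th power of their product is central. For comparison, the paper dispatches exactly this step in one sentence: it asserts that $\KK(v)$ itself contains a subfield $\LL$ with $\LL/\KK$ purely inseparable of degree $p$, and then runs precisely your $\mathbf{F}_p$-vector-space argument on $\LL^*/\KK^*$. Your instinct that this assertion needs proof is sound: it is not a general fact about inseparable extensions. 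For instance, with $\KK=\mathbf{F}_p(x,y)$ and $\alpha$ a root of the irreducible polynomial $T^{p^2}+xT^p+y$, the extension $\KK(\alpha)/\KK$ is inseparable of degree $p^2$ (the separable closure of $\KK$ in it is $\KK(\alpha^p)$) yet contains no $w\notin\KK$ with $w^p\in\KK$; for $p=2$ one checks this directly by writing $w=a+b\alpha+c\alpha^2+d\alpha^3$ and noting that the condition $w^2\in\KK$ reads $(b+dx)^2+xc^2+yd^2=0$ in $\mathbf{F}_2[x,y]$, where the three summands involve monomials whose exponent pairs have disjoint parity types, forcing $b=c=d=0$. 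The paper's assertion is valid when $\KK(v)/\KK$ is normal, and automatically when $[\KK(v):\KK]=p$ (a degree-$p$ inseparable extension is purely inseparable) --- which is your ``$g$ of degree one'' case and is also the only situation in which the paper subsequently applies the proposition, namely the remark on division algebras of dimension $p^2$ for $p=2,3$. So your proposal is incomplete at exactly the point where one must either justify the paper's one-line claim or replace it, e.g.\ by showing that a central division algebra containing an inseparable element contains a purely inseparable one; nothing in your sketch settles this for a general inseparable $v$.
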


\begin{proof}
If $v\in A$ is not separable, then there exists a subfield
$$
\KK \subset \LL \subset \KK(v)
$$
which is a purely inseparable extension of $\KK$ of degree $p$. Then
every non-trivial element of the group~\mbox{$\LL^*/\KK^*$} has order~$p$. Since $|\LL^*/\KK^*|=\infty$,
we conclude that the group~\mbox{$\LL^*/\KK^* \subset A^*/\KK^*$} has unbounded finite subgroups.

Conversely, suppose that every element of $A$ is separable over $\KK$. Denote by $n'$  the largest factor of $n$ which is not divisible by $\Char\KK$. Then
for every element~\mbox{$v\in  A^*$} whose image in  $A^*/\KK^*$ has finite order, one has~\mbox{$v^{n'} \in \KK^*$}.
Indeed, otherwise~$v^{n'}$ would be inseparable over~$\KK$, because
$$
\left(v^{n'}\right)^{\frac{n}{n'}}=v^n\in\KK^*
$$
by Lemma~\ref{lemma:SB-exponent}.
Hence, the assertion follows from  Proposition~\ref{proposition:SBp}.
\end{proof}

\begin{remark}
If $p=2$ or $p=3$, then every  central
division algebra $A$ of dimension $p^2$ over a field $\KK$ of characteristic  $p$ contains an element $v\in A$ which is inseparable over $\KK$. For~\mbox{$p=3$}
this result was proved in  \cite{Wed21}.
For $p=2$ the assertion is easy. Indeed, choose a subfield~\mbox{$\LL \subset A$} of degree $2$ over $\KK$.
If the extension $\KK\subset\LL$ is inseparable, then we are done.
Suppose that it is separable.
Then  $\KK \subset \LL$ is a Galois extension.
Let~$\sigma$ be the generator of its Galois group.
By the Skolem--Noether theorem
(see e.g.~\mbox{\cite[\S10.1, Th\'eor\`eme 1]{Bourbaki}}) 
the action of~$\sigma$  on~$\LL$ extends to
an inner automorphism
of $A$ given by an element $v\in A^*$, that is,
$$
vuv^{-1}=\sigma(u)
$$
for every $u\in \LL$.
Since the degree of $\LL$ over $\KK$ is $2$, the automorphism $\sigma^2$ is trivial.
Hence~$v^2$ commutes with $\LL$. On the other hand, the centralizer
of $\LL$ in $A$ is $\LL$ itself, and so we have
$v^2\in \LL$. Observe that
$$
\sigma(v^2)=vv^2v^{-1}=v^2.
$$
Since~$\LL$ is a Galois extension of $\KK$, this means that the element $v^2$ is, in fact, contained in~$\KK$.
Thus the field extension $\KK \subset \KK(v)$ is not separable as desired.
In particular, using Proposition~\ref{boundnessfornonperfect} we see that, for $p=2$ or~\mbox{$p=3$} and a central
division algebra $A$ of dimension $p^2$ over a field $\KK$ of characteristic~$p$, the group $A^*/\KK^*$ has unbounded finite subgroups.
\end{remark}

\section{Quadrics}
\label{section:quadrics}

In this section we study automorphism groups of quadrics and
prove Theorem~\ref{theorem:quadricnew}.

Let $V$ be a finite-dimensional vector space over a field $\KK$,  and let $q$ be a non-degenerate quadratic form on $V$.
Recall that a quadratic form $q$ is said to be non-degenerate if the associated symmetric bilinear form
$$
B_q\colon V\times V \to \KK,\quad B_q(v,w)= q(v+w) -q(v) -q(w),
$$
is non-degenerate. If $\Char \KK =2$ the form $B_q$ is also alternating. Hence, in this case the dimension of $V$ must be even.

Denote by $\mathrm{O}(V,q)$ the orthogonal (linear algebraic) group corresponding to $q$.
The following result is well-known (see, for instance,~\cite[\S\S22.4, 22.6]{Borel}).

\begin{lemma}\label{lemma:O-anisotropic}
The group  $\mathrm{O}(V,q)$ is reductive. It is anisotropic if and only
if $q$ does not represent $0$.
\end{lemma}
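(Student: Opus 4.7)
The plan is to dispose of the two claims separately. For reductivity, I would pass to $\bar{\KK}$: over an algebraically closed field, any non-degenerate quadratic form on $V$ is equivalent to a standard split one (in characteristic $2$ this uses exactly the hypothesis that $\dim V$ is even, which is automatic from non-degeneracy of $B_q$ as observed in the text), so $\mathrm{O}(V, q)_{\bar{\KK}}$ is isomorphic to the classical orthogonal group, which is reductive. Since reductivity descends from $\bar{\KK}$, the group $\mathrm{O}(V, q)$ is itself reductive.

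For the anisotropy equivalence, I plan to argue both directions using weight-space decompositions for $\Gm$-actions. Suppose first that $q$ represents $0$; pick a non-zero isotropic vector $v \in V$ and, using non-degeneracy of $B_q$, choose $w \in V$ with $B_q(v, w) = 1$. A short computation shows that after replacing $w$ by $w - q(w) v$ when $\Char \KK \ne 2$ and by $w + q(w) v$ when $\Char \KK = 2$, one may assume $q(w) = 0$ as well. Then $H = \KK v \oplus \KK w$ is a hyperbolic plane, $V = H \oplus H^\perp$ by non-degeneracy of $q|_H$, and the formula $\iota(t)(v) = tv$, $\iota(t)(w) = t^{-1} w$, $\iota(t)|_{H^\perp} = \mathrm{id}$ defines an embedding $\iota\colon \Gm \hookrightarrow \mathrm{O}(V, q)$, so $\mathrm{O}(V, q)$ is not anisotropic.

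Conversely, given a non-trivial embedding $\iota\colon \Gm \hookrightarrow \mathrm{O}(V, q)$, I would decompose $V = \bigoplus_{n \in \ZZ} V_n$ into weight spaces for the action induced by $\iota$; by non-triviality, some $V_n$ with $n \ne 0$ is non-zero. For every $v \in V_n$ the identity $q(v) = q(\iota(t) v) = t^{2n} q(v)$, interpreted scheme-theoretically as an identity of regular functions of $t \in \Gm$, forces $q(v) = 0$. Hence $V_n$ consists of isotropic vectors and $q$ represents $0$. The only subtlety worth flagging is the characteristic-$2$ case, where $B_q$ is alternating and the construction of the complementary isotropic vector $w$ needs the modified formula above, but no serious difficulty arises.
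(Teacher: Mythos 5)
Your argument is correct. Note, however, that the paper does not prove this lemma at all: it is stated as well-known with a reference to \cite[\S\S22.4, 22.6]{Borel}, so there is no internal proof to compare against. What you supply is a self-contained replacement for that citation, and it is the standard one: reductivity by base change to $\bar{\KK}$ and identification with the split orthogonal group (using the Arf normal form, i.e.\ the content of Lemma~\ref{lemma:quadratic-form-in-char-2}, when $\Char\KK=2$); the implication ``isotropic form $\Rightarrow$ isotropic group'' by splitting off a hyperbolic plane $H=\KK v\oplus\KK w$ and letting $\Gm$ act with weights $\pm1$ on $v,w$ and trivially on $H^{\perp}$; and the converse by observing that a non-zero vector in a non-zero weight space of any embedded $\Gm$ must be isotropic, since $q(v)=t^{2n}q(v)$ identically in $t$. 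All the computations check out, including the adjustment $w\mapsto w-q(w)v$ (which in fact works uniformly in all characteristics, since $q(w+\lambda v)=q(w)+\lambda B_q(v,w)+\lambda^{2}q(v)=q(w)+\lambda$; your case split at $\Char\KK=2$ is harmless but unnecessary). The one point you leave implicit is that a closed embedding $\Gm\hookrightarrow\mathrm{O}(V,q)\subset\GL(V)$ yields a faithful representation of $\Gm$, so not all weights can vanish; this is immediate but worth a half-sentence. Your approach buys a proof that works verbatim over any field, matching the generality in which the lemma is used in the proof of Theorem~\ref{theorem:quadricnew}.
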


We will also need the following structural result on quadratic forms over a perfect field of characteristic $2$ due to Arf (\cite{Arf41}).

\begin{lemma}\label{lemma:quadratic-form-in-char-2}
Let $V$ be a finite-dimensional vector space over a perfect field $\KK$ of characteristic $2$,  and let $q$ be a non-degenerate quadratic form on $V$ (so that in particular~\mbox{$\dim V=2k$} is even).  Then, for
 some coordinates $x_1, \ldots, x_{2k}$ on $V$, the quadratic form $q$ is given by
\begin{equation}\label{canforminchartwo}
q_a(x_1, \ldots , x_{2k})= x_1^2 +x_1 x_2 + a x_2^2 + x_3 x_4 +\ldots +x_{2k-1}x_{2k},
\end{equation}
where $a$ is an element of $\KK$.
Moreover, two quadratic forms $q_a(x_1, \ldots , x_{2k})$ and~\mbox{$q_{a'}(x_1, \ldots , x_{2k})$} are equivalent if and only if $a$ and $a'$ have the same image
in the cokernel of Artin--Schreier homomorphism
$$
\KK \to \KK,\quad c \mapsto c^2 -c,
$$
which is  the Arf invariant  of the quadratic form.
In particular, if $\dim V >2$ then every non-degenerate quadratic form on $V$ represents $0$.
\end{lemma}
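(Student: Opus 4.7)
The plan is to use the associated alternating form $B_q$ to decompose $V$ into orthogonal symplectic planes, normalize the form on each plane using perfectness, then merge blocks via an explicit substitution; the main obstacle will be the well-definedness of the Arf invariant. Because $\Char \KK = 2$, the form $B_q$ is alternating, and its non-degeneracy forces $\dim V = 2k$ to be even and produces a symplectic basis $e_1, f_1, \ldots, e_k, f_k$ with $B_q(e_i, f_j) = \delta_{ij}$ and $B_q(e_i, e_j) = B_q(f_i, f_j) = 0$. Set $P_i = \langle e_i, f_i \rangle$; since in characteristic $2$ we have $q(v + w) = q(v) + q(w)$ whenever $B_q(v, w) = 0$, the planes $P_i$ are mutually $q$-orthogonal and $q = q|_{P_1} \perp \ldots \perp q|_{P_k}$. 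Writing $\alpha_i = q(e_i)$ and $\beta_i = q(f_i)$, the restriction is $q|_{P_i}(x e_i + y f_i) = \alpha_i x^2 + x y + \beta_i y^2$. If $\alpha_i = 0$, a direct substitution turns this into a hyperbolic plane; if $\alpha_i \neq 0$, perfectness provides $\gamma_i \in \KK$ with $\gamma_i^2 = \alpha_i$, and $x \mapsto \gamma_i^{-1} x$, $y \mapsto \gamma_i y$ puts the block into $x^2 + x y + a_i y^2$ with $a_i = \alpha_i \beta_i$.

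Next, I verify by explicit computation the merging identity
\[ q_a(x, y) + q_b(u, v) \;\sim\; q_{a+b}(x', y') + u'' v', \]
using the invertible substitution $x = x' + u'' + b v'$, $y = y'$, $u = u'' + b v'$, $v = y' + v'$; combined with the observation that $u'' v'$ is equivalent to $q_0$, this gives $q_a \perp q_b \sim q_{a+b} \perp q_0$. Iterating this identity converts the orthogonal decomposition from the first paragraph into $q_a \perp q_0 \perp \ldots \perp q_0$ with $a = a_1 + \ldots + a_k$, which is precisely the canonical form \eqref{canforminchartwo}. When $\dim V > 2$, the resulting form contains a hyperbolic summand $q_0$, which represents $0$ nontrivially, establishing the final assertion.

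Finally, for the classification: if $a' - a = c^2 - c$, then the substitution $x_1 \mapsto x_1 + c x_2$ with the other coordinates fixed converts $q_{a'}$ into $q_a$. The converse is the main obstacle: one must show that $q_a \sim q_{a'}$ implies $a - a'$ lies in the image of the Artin--Schreier map $c \mapsto c^2 - c$. In the two-dimensional case one identifies the equivalence class of a non-hyperbolic $q_a$ with the isomorphism class of the \'etale quadratic algebra $\KK[t]/(t^2 + t + a)$ (the norm form of this algebra being precisely $q_a$), and then enumerates the elements $\gamma$ of this algebra satisfying $\gamma^2 + \gamma \in \KK$: a direct calculation shows that any such $\gamma$ is either in $\KK$ or of the form $c + t$, and in the latter case $\gamma^2 + \gamma = a + c^2 + c$, which yields the desired assertion modulo $\{c^2 - c\}$. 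For $k \geq 2$, Witt's cancellation theorem for non-degenerate quadratic forms in characteristic $2$ removes the hyperbolic planes from both sides and reduces the problem to the two-dimensional case.
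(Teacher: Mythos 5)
The paper does not prove this lemma at all: it is stated as a known structural result and attributed to Arf's 1941 paper, so your self-contained argument is a genuine addition rather than a variant of the paper's proof. Your route is the standard one and is essentially correct: the symplectic decomposition into $q$-orthogonal planes, the normalization $\alpha_i x^2+xy+\beta_i y^2\mapsto x^2+xy+\alpha_i\beta_i y^2$ using perfectness, and the merging substitution all check out (I verified the identity $q_a\perp q_b\sim q_{a+b}\perp\mathbb{H}$ with your change of variables, and the substitution is indeed invertible), as does the easy direction of the classification via $x_1\mapsto x_1+cx_2$. Two points deserve more care. First, in the converse direction you assert that the equivalence class of an anisotropic binary form $q_a$ determines the isomorphism class of the algebra $\KK[t]/(t^2+t+a)$; this is true but not automatic from the parenthetical you give (which only goes from algebra to form). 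You should either identify the algebra intrinsically as the even Clifford algebra of $q_a$ (the element $t=e f$ satisfies $t^2+t=a$), or observe that it is the unique separable quadratic extension over which $q_a$ becomes isotropic; with that in place your enumeration of the $\gamma$ with $\gamma^2+\gamma\in\KK$ finishes the binary case, and you should also dispose of the isotropic case separately ($q_a$ isotropic iff $a\in\wp(\KK)$, so two isotropic forms trivially have Arf invariants differing by $\wp(\KK)$, and an isotropic form is never equivalent to an anisotropic one). Second, Witt cancellation in characteristic $2$ fails for general quadratic forms; it does hold for nonsingular ones (non-degenerate polar form), which is exactly your situation, but you should say so and cite the appropriate version. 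With these two repairs the argument is complete, and it has the merit of making the paper's black-box reference to Arf fully explicit.
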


\begin{lemma}[{cf. \cite[Lemma~2.1]{GA13}}]
\label{lemma:orderp-in-O}
Let $\KK$ be a field of characteristic $p>2$.
Let $V$ be a vector space over $\KK$, and let $q$ be a non-degenerate quadratic form on~$V$. Assume that~$q$ does not represent~$0$. Then  the group of  $\KK$-points of~\mbox{$\mathrm{O}(V,q)$} has no elements of order~$p$.
 \end{lemma}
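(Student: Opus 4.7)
The plan is to derive a contradiction by producing a nonzero vector on which $q$ vanishes. Suppose $g \in \mathrm{O}(V,q)(\KK)$ has order $p$, and set $u = g - \mathrm{id}_V$. Since $\mathrm{id}_V$ commutes with $u$, the binomial expansion in $\mathrm{End}(V)$ gives
\[
0 = g^p - \mathrm{id}_V = (\mathrm{id}_V + u)^p - \mathrm{id}_V = u^p,
\]
because all intermediate binomial coefficients vanish in characteristic $p$. Thus $u$ is nilpotent and, since $g \neq \mathrm{id}_V$, nonzero. Let $k \ge 1$ be the largest integer with $u^k \neq 0$, and pick $v \in V$ with $y := u^k(v) \neq 0$. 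By construction $u(y) = 0$ and $y \in \mathrm{Im}(u)$, so $y \in \ker u \cap \mathrm{Im}(u)$.

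Next I would show that the two subspaces $\ker u$ and $\mathrm{Im}(u)$ are orthogonal to each other with respect to the bilinear form $B_q$. Indeed, for any $w \in \ker u$ one has $g(w) = w$, so the $g$-invariance of $B_q$ gives
\[
B_q(v,w) = B_q(g(v), g(w)) = B_q(v + u(v), w),
\]
whence $B_q(u(v), w) = 0$ for every $v \in V$. In particular, $B_q(y, y) = 0$ since $y$ belongs to both subspaces.

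Because $\Char \KK = p > 2$, the element $2$ is invertible in $\KK$ and $q(y) = \tfrac{1}{2} B_q(y, y) = 0$. This contradicts the hypothesis that $q$ does not represent $0$, since $y \neq 0$. Hence no element of order $p$ can exist in $\mathrm{O}(V,q)(\KK)$.

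The argument is elementary; the one step that requires the hypotheses in an essential way is the final passage from $B_q(y,y) = 0$ to $q(y) = 0$, which fails in characteristic $2$ (consistent with the fact that Lemma~\ref{lemma:quadratic-form-in-char-2} would then force $q$ to represent $0$ anyway whenever $\dim V > 2$, making the statement vacuous or requiring a separate treatment).
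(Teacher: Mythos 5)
Your proof is correct and follows essentially the same route as the paper's: both exploit the nilpotency of $g-\mathrm{id}_V$ to produce a nonzero vector $y$ lying in $\ker(g-\mathrm{id}_V)\cap\mathrm{Im}(g-\mathrm{id}_V)$, and both use the $g$-invariance of $B_q$ to conclude $B_q(y,y)=2q(y)=0$, contradicting anisotropy since $p>2$. The only (cosmetic) difference is that you build the relevant vectors explicitly from the nilpotency chain of $u=g-\mathrm{id}_V$, whereas the paper invokes the Jordan normal form of $g-1$ to the same effect.
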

\begin{proof}
Assuming the contrary, let $g\in  \mathrm{O}(V,q)(\KK)$ be an element of order $p$. Viewing  $g$  as a linear endomorphism of $V$, we have
$$
g^p -1 =(g-1)^p=0.
$$
Applying the Jordan Normal Form theorem to $g-1$,
we can find linearly independent vectors  $v_1, v_2 \in V$
such that $g (v_1) = v_1$  and $g (v_2) = v_1 + v_2$.  Thus, we have
$$
B_q(v_1, v_2) = B_q(g (v_1), g (v_2 )) = B_q(v_1, v_1) + B_q(v_1, v_2).
$$
Hence, we obtain $2q(v_1) = B_q(v_1, v_1)= 0$, that is, $q$ represents $0$.
\end{proof}

\begin{lemma}[{cf. \cite[Corollary~4.4]{BandmanZarhin2015a}}]
\label{lemma:subgroups-in-O}
Let $\KK$ be a  field that contains all roots of $1$.
Assume that $\Char \KK \ne 2$ or $\KK$ is perfect.
Suppose that~$q$ does not represent $0$.
Then every finite subgroup of~\mbox{$\mathrm{O}(V,q)(\KK)$}
is isomorphic to $(\ZZ/2\ZZ)^m$, where $m\le \dim V$.
\end{lemma}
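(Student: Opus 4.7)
The plan is to prove that every finite-order element $g\in\mathrm{O}(V,q)(\KK)$ satisfies $g^2=\id$, so that every finite subgroup is an elementary abelian $2$-group, and then to bound its rank by $\dim V$ via a joint eigenspace decomposition.

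Let $g\in\mathrm{O}(V,q)(\KK)$ have finite order, and let $g=g_sg_u$ be its Jordan decomposition. The semisimple part $g_s$ has order coprime to $\Char\KK$, so since $\KK$ contains all roots of $1$, $g_s$ is diagonalizable over $\KK$. For an eigenvector $g_sv=\lambda v$, the identity $q(v)=q(g_sv)=\lambda^2q(v)$ gives $(\lambda^2-1)q(v)=0$, so $q$ vanishes on $V_\lambda$ whenever $\lambda^2\ne 1$; the anisotropy of $q$ then forces $V_\lambda=0$ for every such $\lambda$, so every eigenvalue of $g_s$ satisfies $\lambda^2=1$. If $\Char\KK\ne 2$, Lemma~\ref{lemma:orderp-in-O} gives $g_u=\id$, so $g=g_s$ has eigenvalues in $\{\pm 1\}$ and $g^2=\id$. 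If instead $\Char\KK=2$ and $\KK$ is perfect, then Lemma~\ref{lemma:quadratic-form-in-char-2} combined with anisotropy of $q$ forces $\dim V=2$; in characteristic $2$ the condition $\lambda^2=1$ means $\lambda=1$, hence $g_s=\id$, and $g=g_u$ is unipotent on a two-dimensional space, which gives $(g_u-\id)^2=0$ and therefore $g_u^2=\id$ in characteristic $2$.

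To bound the rank when $\Char\KK\ne 2$, let $\Gamma\cong(\ZZ/2\ZZ)^m$ act faithfully on $V$. The commuting involutions are simultaneously diagonalizable over $\KK$, producing a joint eigenspace decomposition $V=\bigoplus_\chi V_\chi$ indexed by characters $\chi\colon\Gamma\to\{\pm 1\}$; faithfulness forces the characters with $V_\chi\ne 0$ to separate the points of $\Gamma$, hence to span the $\FF_2$-dual of $\Gamma$, so that $m\le\#\{\chi:V_\chi\ne 0\}\le\dim V$. In the remaining case $\Char\KK=2$ with $\dim V=2$, the kernel $\mathrm{SO}(V,q)$ of the Dickson invariant inside $\mathrm{O}(V,q)$ is a one-dimensional anisotropic torus, namely the norm-$1$ torus of the separable quadratic extension $\LL\supset\KK$ prescribed by the Arf invariant of $q$; Example~\ref{example:P1-without-2-pts} then shows that $\mathrm{SO}(V,q)(\KK)$ is torsion-free, so every finite subgroup of $\mathrm{O}(V,q)(\KK)$ injects into $\mathrm{O}(V,q)(\KK)/\mathrm{SO}(V,q)(\KK)\hookrightarrow\ZZ/2\ZZ$ and therefore $m\le 1\le\dim V$.

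The main obstacle is the characteristic $2$ case: Lemma~\ref{lemma:orderp-in-O} is unavailable, the eigenvalue argument only controls $g_s$ and not $g_u$, and the joint eigenspace trick for bounding the rank collapses because all $\KK$-eigenvalues of a finite-order element are $1$. One must invoke Arf's classification (Lemma~\ref{lemma:quadratic-form-in-char-2}) to reduce to $\dim V=2$ and then use the torus-theoretic Example~\ref{example:P1-without-2-pts} in place of the eigenspace argument.
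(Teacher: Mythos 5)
Your proof is correct and follows essentially the same route as the paper: in characteristic different from $2$ you use Lemma~\ref{lemma:orderp-in-O} to kill the unipotent part, diagonalize over $\KK$ using the roots of unity, and force eigenvalues $\pm 1$ from anisotropy, then bound $m$ by simultaneous diagonalization; in characteristic $2$ you invoke Lemma~\ref{lemma:quadratic-form-in-char-2} to reduce to $\dim V=2$ and Example~\ref{example:P1-without-2-pts} applied to the rank-one anisotropic torus inside $\mathrm{O}(V,q)$. The only differences (phrasing via the Jordan decomposition, the explicit character-space count, the identification of the torus as the norm-one torus of the Arf extension) are cosmetic.
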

\begin{proof}
By Lemma \ref{lemma:quadratic-form-in-char-2}, if $\KK$ is perfect and $\Char \KK=2$, then we must have $\dim V =2$.
In this case, by  Lemma \ref{lemma:O-anisotropic}, the group $\mathrm{O}(V,q)$ is anisotropic and, thus,
isomorphic to a semi-direct product of an anisotropic torus $T$ of rank $1$ and the finite group $\ZZ/2\ZZ$.
According to Example~\ref{example:P1-without-2-pts}, the group $T(\KK)$ does not contain non-trivial finite
subgroups, and hence every non-trivial finite subgroup of~$\mathrm{O}(V,q)$ is isomorphic to~$\ZZ/2\ZZ$.

Now assume  that $\Char\KK\neq 2$.
Let $g\in  \mathrm{O}(V,q)(\KK)$ be an element of finite order. By Lemma \ref{lemma:orderp-in-O} the order of $g$ is not divisible by the characteristic of $\KK$.
Since $\KK$ contains all roots of unity, it follows that every
such element~\mbox{$g\in  \mathrm{O}(V,q)(\KK)$} viewed as a linear endomorphism of $V$
is diagonalizable in an appropriate basis for $V$.
Moreover, since $q$ does not represent~$0$, the diagonal entries of the matrix
of $g$ in this basis must be equal to~\mbox{$\pm 1$}. Hence $g^2=1$.
It follows that every finite subgroup~\mbox{$\Gamma\subset\mathrm{O}(V,q)(\KK)$} is isomorphic to $(\ZZ/2\ZZ)^m$ for some
non-negative integer $m$. In particular, $\Gamma$ is abelian, and thus it is conjugate
to a subgroup of
the group of diagonal matrices in $\GL(V)$.
Hence, one has~\mbox{$m\le \dim V$}.
\end{proof}

Now we are ready to prove
Theorem~\ref{theorem:quadricnew}.

\begin{proof}[Proof of Theorem~\ref{theorem:quadricnew}]
Let $V$ be an $n$-dimensional vector space such that $\PP^{n-1}$ is identified
with the projectivization $\PP(V)$, and let $q$ be a quadratic form
corresponding to the quadric~$Q$.

First, assume that $\KK$ is a perfect field of characteristic $2$. If $n$ is even, then the quadratic form is non-degenerate;
indeed, otherwise its kernel $T$ would be at least two-dimensional, so that the singular locus of $Q$,
which is $Q\cap\PP(T)$, would be non-empty.
Thus, by Lemma~\ref{lemma:quadratic-form-in-char-2} one has~\mbox{$Q(\KK)\ne \varnothing$}.  Moreover, writing  $q$ in the form~\eqref{canforminchartwo},
we see that~\mbox{$\Aut(Q)$} contains a subgroup isomorphic to~$\KK^*$.
Hence, $\Aut(Q)$ has unbounded finite subgroups. If $n$ is odd, the symmetric bilinear form
$B_q\colon V\times V \to \KK$ associated to $q$ has
a one-dimensional kernel.
In this case $q$ can be written as
$$
q(x_1, \ldots , x_n)= x_1^2 + r(x_2, \ldots, x_n)
$$
for some coordinates $x_1, \ldots, x_{n}$ on $V$ and some
non-degenerate quadratic form $r$ in $n-1$ variables.  Applying  Lemma ~\ref{lemma:quadratic-form-in-char-2} to  $r$, we see that  ~\mbox{$Q(\KK)\ne \varnothing$}
and $\Aut(Q)$ has unbounded finite
subgroups. In fact, in this case $\Aut(Q)$ is isomorphic to  the group of linear transformations
of the quotient $\bar{V}$ of $V$ by $T$
which preserve the induced bilinear form $\bar{B}_q$
on~$\bar{V}$, i.e., to the symplectic group
$\mathrm{Sp}(\bar{V}, \bar{B}_q)(\KK)$; see~\cite[\S22.6]{Borel}.
We see that in the case when $\Char \KK = 2$, assertion~(i) holds, while the assumptions of assertions~(ii), (iii),
and~(iv) do not hold.

From now on we assume that $\Char\KK \ne 2$.
The group $\Aut(Q)$ is isomorphic to the group of $\KK$-points of
the group scheme quotient $G= \mathrm{O}(V,q)/\mu_2$, where
$\mu_2 \subset  \mathrm{O}(V,q)$ is the central subgroup of order~$2$.
(More geometrically,  $\Aut(Q)$ can be identified with the group $\mathrm{PO}(V,q)$ of
automorphisms of the projective space  $\PP(V)$ that preserve $q$ up
to a scalar multiple.)
By Lemma~\ref{lemma:O-anisotropic},  the group $G $ is anisotropic if and only if~\mbox{$Q(\KK)=\varnothing$}.
The connected component of identity $G^{\circ}\subset G$  is a connected semi-simple algebraic group.
Moreover,  the algebraic fundamental
group  of $G^\circ $  has order $2$ if  $n$ is odd and order $4$  if $n$ is even.
Also, note that no prime other than $2$ is a torsion prime  for $G^{\circ}$. Thus, assertion~(i) of the proposition follows from Theorem~\ref{theorem:main}
applied to $G^{\circ}$. (It also follows that the group~\mbox{$\Aut(Q)$} has no elements of
order~\mbox{$\Char\KK$};  cf.  Lemma~\ref{lemma:O-anisotropic}.)
The reader will see that the argument we give below for the remaining assertions of the proposition also furnishes a direct proof of~(i).

If $n$ is odd, then the embedding
$\mu_2 \hookrightarrow \mathrm{O}(V,q)$ splits,
so that  $\Aut(Q)$ is isomorphic to the subgroup~\mbox{$\mathrm{SO}(V,q)(\KK)\subset \mathrm{O}(V,q)(\KK) $} of the orthogonal group that consists of matrices whose determinant is equal to $1$.
Thus, assertion~(ii) follows from
Lemma~\ref{lemma:subgroups-in-O}.

Suppose that $n$ is even.
We know from Lemma~\ref{lemma:subgroups-in-O} that
every non-trivial element of~\mbox{$\mathrm{O}(V,q)(\KK)$}
of finite order has order $2$.
Hence, using the exact sequence
\begin{equation*}
0\to \mu_2 \to  \mathrm{O}(V,q)(\KK)\to \Aut(Q) \to \KK^*/(\KK^*)^2,
\end{equation*}
we see that every non-trivial element of $\Aut(Q)$ of finite order
has order $2$ or $4$.
Let~\mbox{$\Gamma \subset \Aut(Q)$} be a finite subgroup.
Consider the embedding
$$
\Aut(Q)\cong G (\KK) \hookrightarrow  G(\bar{\KK}) \cong  \mathrm{O}(V,q)(\bar{\KK})/\{\pm 1\},
$$
and let $\tilde \Gamma$ be the preimage of $\Gamma$ in  $\mathrm{O}(V,q)(\bar{\KK})$. The order of every element of~$\tilde \Gamma$
divides~$8$, and the same is true for the subgroup
$\hat{\Gamma}\subset \GL(V)(\bar{\KK})$ generated by $\tilde \Gamma$
and scalar matrices whose orders divide $8$.
Thus, we conclude from Theorem \ref{theorem:Burnside} that
$|\hat{\Gamma}|$ divides~$8^n$.
On the other hand, we have~\mbox{$|\hat{\Gamma}|=8|\Gamma|$}. This proves
assertion~(iii).
\end{proof}

The next example shows that there exist even-dimensional quadrics satisfying the assumptions of
Theorem~\ref{theorem:quadricnew} and having a faithful action of non-abelian
finite groups.

\begin{example}\label{example:Pfister}
Choose an integer $k\ge 3$.
Let $\Bbbk$ be an algebraically closed field of characteristic zero, and let
$a_1,\ldots,a_k$ be independent transcendental variables.
Set~\mbox{$\KK=\Bbbk(a_1,\ldots,a_k)$}. Consider the Pfister
quadratic form
$$
q_k=\sum_I a_I x_I^2
$$
in the variables $x_I$, $I\subset \{1,\ldots,k\}$,
where $a_I=\prod_{i\in I} a_i$.
Let $Q$ be the quadric defined by the equation $q_k=0$ in
the projective space $\PP^{2^k-1}$ with homogeneous coordinates~$x_I$.

We claim that~\mbox{$Q(\KK)=\varnothing$}.
Indeed, if $P$ is a $\KK$-point on $Q$, its homogeneous coordinates can be written
as polynomials $p_I$ in $a_1,\ldots,a_k$, such that at least one of them is a non-zero polynomial.
Let $M$ denote the maximal degree of the polynomials $p_I$ in the variable $a_k$.
Write
$$
p_I=c_{I,M}a_k^M+c_{I, M-1}a_k^{M-1}+\ldots+c_{I,0},
$$
where $c_{I,j}$ are polynomials in $a_1,\ldots, a_{k-1}$; at least one of $c_{I,M}$ is a non-zero polynomial.
The fact that the quadratic form $q_k$ vanishes at $P$
implies that
$$
a_k^{2M}\sum\limits_{k\not\in I} a_Ic_{I,M}^2=0, \quad a_k^{2M+1}\sum\limits_{k\not\in I} a_I c_{I\cup\{k\},M}^2=a_k^{2M}\sum\limits_{k\in I} a_Ic_{I,M}^2=0.
$$
Thus, at least one of the $2^{k-1}$-tuples
$$
\big(c_{I,M}\big),\  k\not\in I, \quad\quad
\big(c_{I\cup\{k\},M}\big),\  k\not\in I,
$$
provides a point of the projective space $\PP^{2^{k-1}-1}$ over the field $\Bbbk(a_1,\ldots,a_{k-1})$ where the
quadratic form
$$
q_{k-1}=\sum_I a_I x_I^2, \quad I\subset \{1,\ldots,k-1\},
$$
vanishes. Proceeding by induction, we arrive to the conclusion that the quadratic form
$$
q_1(x_0,x_1)=x_{\varnothing}^2+a_{\{1\}}x_{\{1\}}^2
$$
represents zero over the field $\Bbbk(a_1)$, which is absurd.

Now consider the elements $\sigma, \tau\in\PGL_{2^k}(\KK)$ acting as
$$
\sigma(x_{\{1\}})=-x_{\{1\}}, \quad \sigma(x_{\{2\}})=-x_{\{2\}}, \quad \sigma(x_I)=x_I \text{\ for\ } I\neq \{1\}, \{2\},
$$
and
$$
\tau(x_I)=a_{\overline{I}}x_{\overline{I}},
$$
where $\overline{I}=\{1,\ldots,k\}\setminus I$.
Then both $\sigma$ and $\tau$ preserve the quadric $Q$, and~\mbox{$\sigma^2=\tau^2=1$}.
The commutator
$\iota=\sigma\circ\tau\circ\sigma\circ\tau$
acts as
\[
\aligned
\iota(x_I)=-x_I& \text{\ for\ }  I=\{1\}, \{2\}, \overline{\{1\}}, \overline{\{2\}},\\
\iota(x_I)=x_I& \text{\ for\ }  I\neq\{1\}, \{2\}, \overline{\{1\}}, \overline{\{2\}}.
\endaligned
\]
In particular, $\iota$ is a non-trivial element of $\PGL_{2^k}(\KK)$, i.e. $\sigma$ and $\tau$ do not
commute with each other.
On the other hand, it is clear that $\sigma$ and $\tau$ generate a finite subgroup in~\mbox{$\Aut(Q)$}.

Note that the algebraic group $\Aut(Q)$ has two connected components (both over $\KK$ and over $\bar{\KK}$),
corresponding to the connected components of the orthogonal group~\mbox{$\mathrm{O}(q_k)$}.
The element $\sigma\in \Aut(Q)$ clearly lifts to an element in the neutral component
$\mathrm{SO}(q_k)$ of~\mbox{$\mathrm{O}(q_k)$}, while $\tau$ lifts to an element of $\mathrm{SO}(q_k)_{\bar{\KK}}$.
Thus, both $\sigma$ and $\tau$ are contained in the neutral component of $\Aut(Q)$.
This gives an example of a finite non-abelian subgroup of a connected anisotropic
reductive group over a field containing all roots of unity, cf.~Lemma~\ref{lemma:SB-exponent}.
\end{example}

\end{document}